\def\namedlabel#1#2{\begingroup
 #2%
 \def\@currentlabel{#2}%
 \phantomsection\label{#1}\endgroup
}
\renewcommand{\PrintDOI}[1]{\href{http://dx.doi.org/\detokenize{#1}}{doi: \detokenize{#1}}%
	\IfEmptyBibField{pages}{, (to appear in print)}{}}
\theoremstyle{plain}
\newtheorem*{theorem*}{Theorem}
\newtheorem*{thmex*}{Theorem~\ref{example}}
\newtheorem*{thmasymp*}{Theorem~\ref{thmAsymp}}
\newtheorem{theorem}{Theorem}[section]
\newtheorem{corollary}[theorem]{Corollary}
\newtheorem{lemma}[theorem]{Lemma}
\newtheorem{proposition}[theorem]{Proposition}
\newtheorem{remark}[theorem]{Remark}
\newtheorem{example}[theorem]{Example}
\newtheorem{question}[theorem]{Question}
\theoremstyle{definition}
\newtheorem{definition}[theorem]{Definition}
\newcommand{\ben}{\begin{enumerate}}
\newcommand{\een}{\end{enumerate}}
\newcommand{\ed}{\end{document}}
\definecolor{rrr}{rgb}{.9,0,.1}
\definecolor{rr}{rgb}{.8,0,.3}
\title{ knot groups, quandle extensions and orderability}
\author[Idrissa Ba]{Idrissa Ba}
\email{ba162006@yahoo.fr}
\author[Mohamed Elhamdadi]{Mohamed Elhamdadi} 
\address{Department of Mathematics, 
University of South Florida, Tampa, FL 33620, U.S.A.} 
\email{emohamed@math.usf.edu} 
\urladdr{ http://shell.cas.usf.edu/~emohamed/}
\begin{document}

 \subjclass[2010]{Primary: 57M07, 57M27, 06F15, 20F99.}
\keywords{knot groups, circular orderability, quandles, orderability}

\begin{abstract}
This paper gives a new way of characterizing L-space $3$-manifolds by using orderability of quandles. Hence, this answers a question of Adam Clay et al. [Question 1.1 of Canad. Math. Bull. 59 (2016), no. 3, 472-482].  We also investigate both the orderability and circular orderability of dynamical extensions of orderable quandles. We give conditions under which the conjugation quandle on a group, as an extension of the conjugation of a bi-orderable group by the conjugation of a right orderable group, is right orderable. We also study the right circular orderability of link quandles.  We prove that the $n$-quandle $Q_n(L)$ of the link quandle of $L$ is not right circularly orderable and hence it is not right orderable.  But on the other hand, we show that there are infinitely many links for which the $p$-enveloping group of the link quandle is right circularly orderable for any prime integer $p$.
\end{abstract}

\maketitle 
\section{Introduction}
Quandles are, in general, non-associative algebraic structures whose axioms are modeled on the three Reidemeister moves in knot theory. The main examples of quandles come from modules over the Laurent
polynomial ring $\mathbb{Z}[t^{\pm 1}]$ and from conjugacy classes in groups.  Quandles were introduced in the 1980s independently by Joyce in \cite{Joyce} and Matveev in \cite{Matveev}.  However, the notion of a quandle can be traced back to the 1940's in the work of Mituhisa Takasaki \cite{Takasaki}.   Joyce and Matveev associated to each oriented knot $K$ a certain quandle $Q(K)$ called the fundamental quandle of the knot $K$ and showed that 
it is a complete invariant up to reversed mirror.  In other words, two knots $K$ and $K'$ are equivalent (up to reverse and mirror image) if and only if the fundamental quandles $Q(K)$ and  $Q(K')$ are isomorphic as quandles.  Since their introduction, quandles and their kin (biquandles, racks and biracks) have been used extensively to obtain invariants of knots in the $3$-space and knotted surfaces in $4$-space (see for example \cite{EN,CEGS, CKS}).  There have been also interest in quandles and their relations to other areas of mathematics such as ring theory \cite{BPS, EFT, ENS, ENSS}, quasigroups and Moufang loops \cite{Elhamdadi}, representation theory \cite{EM, ESZ}, Lie algebras \cite{CCES1, CCES2}, Yang-Baxter equation \cite{CES}, Hopf algebras \cite{AG, CCES2} and  Frobenius algebras \cite{CCEKS}. 

 In the past ten years, the concept of orderability has found intriguing applications in topology. One notable example is the L-space conjecture, as discussed in \cite{BGW} and \cite{Ju}. This conjecture states that a closed, connected, irreducible and orientable $3$-manifold $M$ admits a co-oriented taut foliation if and only if its fundamental group is left-orderable which is also equivalent to $M$ is not an L-space (see \cite{OS} for a definition of an L-space $3$-manifold).  Extensive research has been conducted on this conjecture, yielding strong evidence in its favor.  In \cite{CR}, the authors showed that if the knot group of a nontrivial knot $K\subset \mathbb{S}^3$, is bi-orderable, then performing surgery on $K$ cannot result an L-space. This finding establishes a connection between Heegaard Floer homology and the orderability of the fundamental group of $3$-manifolds. In this paper, we also investigate this relationship by utilizing the fact that the conjugation quandle of a bi-circularly orderable group is right-orderable as a quandle, a result from our previous work (Proposition 3.3 in \cite{BE}).  These recent developments not only contribute to the understanding of orderability in the context of topology but also provide insights into the intricate connections between various mathematical structures and theories. The evidence accumulated through these studies further supports the L-space conjecture and encourages continued exploration of the relationship between orderability and other fundamental aspects of 3-manifold theory \cite{BaC1, BaC2, BRW, Cal}.

The following is the organization of the article.  In section~\ref{Review}, we recall the basics of quandles, give examples and review orderability of groups.  Section~\ref{Circular} deals with orderability of quandles. In Section~\ref{Sec4}, we prove a certain recursive property about circular orderings on quandles, deduce from it some results such as non-circular orderability of $n$-quandles and also show by an example that a homomorphic image of a right circularly orderable quandle may not be  right circularly orderable.  Section~\ref{Sec5} investigates both the orderability and circular orderability of dynamical extensions of orderable quandles.  In Section~\ref{Sec6}, we give conditions under which the conjugation quandle on a group, as an extension of the conjugation of a bi-orderable group by the conjugation of a right orderable group, is right orderable. Section~\ref{Sec7} deals with right circular orderability of link quandles.  We prove that the $n$-quandle $Q_n(L)$ of the link quandle of $L$ is not right circularly orderable and hence it is not right orderable.  But on the other hand, we show that there are infinitely many links $L$ for which the $2$-enveloping group of $Q(L)$ is right circularly orderable.  Furthermore, we show that for sufficiently large prime numbers $n$ and for $(p,q)$ two bridge knots $L$, the $n$-enveloping group Env$_n(Q(L))$ of the quandle $Q(L)$ is right circularly orderable for $p$ congruent to $3$ modulo $4$.  In Section~\ref{Sec8} we investigate the question of which links in the $3$-sphere $\mathbb{S}^3$ have bi-orderable link groups.  
We answer Question 1.1 of \cite{CDN},  which states "can non-bi-orderability of a knot group be determined by examining knot invariants other than the Alexander polynomial".

\section{Review of quandles and orderability}\label{Review}

A {\it quandle} is a non-empty set $Q$ with a binary operation $*$ which satisfies the following conditions:
\begin{enumerate}
	\item For any $t\in Q$, $t*t=t$;
	\item For any $t_1, t_2\in Q$, there exists a unique $t_3\in Q$ such that $t_3*t_2=t_1$; and
	\item For any $t_1, t_2, t_3\in Q$, 
	$(t_1*t_2)*t_3=(t_1*t_3)*(t_2*t_3)$.
\end{enumerate}
For any $s\in Q$, we define the right multiplication map $R_s: Q \rightarrow Q$ given by $t \mapsto t * s$. Then axioms (2) and (3) state that the map $R_s$ is an automorphism of the quandle $Q$ and axiom (1) states that $s$ is a fixed point of $R_s$ for any $s\in Q$.

An element $e$ of a quandle $Q$ is called {\it stabilizer element} if $s*e=s$ for any $s\in Q$. 

A quandle $Q$ is called {\it trivial} if all its elements are stabilizer elements.
A trivial quandle can have an arbitrary number of element in contrast to a trivial group.
 
We call $id_Q$ the identity automorphism of $Q$. A quandle is called {\it involutory} if $R_s\circ R_s=id_Q$ for any $s\in Q$.

By the conditions $(2)$ and $(3)$ of the definition of quandle, there exists a dual binary operation $*^{-1}$ defined by $s*^{-1}r=t$ if $s=t*r$ for any $r, s$ and $t\in Q$. Hence, we have also that $(s*r)*^{-1}r=s$ and $(s*^{-1}r)*r=s$ for any $r, s\in Q$, this is called right cancellation. Therefore, by the condition $(1)$ of the definition of quandle and right cancellation we have that $s*^{-1}s=s$ for any $s\in Q$.

A quandle is called {\it latin} if for any $s\in Q$ the map $L_s:Q\longrightarrow Q$ defined by $L_s(t)=s*t$ is a bijection for any $t\in Q$. A quandle is called {\it semi-latin} if the map $L_s$ is injective for any $s\in Q$.

A subquandle of a quandle, $Q$, is a subset $X\subset Q$ which is closed under the quandle operations.

The following are some examples of quandles. 
\begin{itemize}
	\item Let $G$ be a group.  Then the binary operation $*$ defined by $g*h=hgh^{-1}$ gives a quandle structure on $G$, denoted $\mathrm{Conj}(G)$, and called \emph{conjugation} quandle of $G$. 
	\item The set $Q=G$, where $G$ is a group, with the binary operation $*$ defined by $g*h=hg^{-1}h$ is a quandle, denoted by Core$(G)$, and called \emph{core} quandle of $G$.  If furthermore $G$ is an abelian group then this quandle is called \emph{Takasaki } quandle \cite{Takasaki}. 
	\item If $\phi$ is an automorphism of a group $G$, then the set $Q=G$ with the binary operation $*$ defined by $g*h=\phi(gh^{-1})h$ is a quandle, denoted by Aut$(G, \phi)$, and called \emph{generalized Alexander} quandle of $G$ with respect to $\phi$.
\end{itemize}

Recall that a group $G$ is called \emph{left-orderable} (respectively right-orderable) if there exists a strict total ordering $<$ on $G$ such that $g<h$ implies $fg<fh$ (respectively $gf<hf$) for all $f, g, h \in G$, the relation $<$ is called a \emph{left-ordering} of $G$.  A left-orderable (respectively right-orderable) quandle was defined in the same fashion in \cite{BPS}. A quandle $(Q,*)$ is called left-orderable (respectively right-orderable) if there exists a strict total ordering $\prec$ on $Q$ such that $s\prec t$ implies $r*s\prec r*t$ (respectively $s*r\prec t*r$) for all $r, s, t \in Q$, the relation $\prec$ is called a \emph{left-ordering} (respectively right-ordering) on $Q$.
The notion of a homomorphism between two ordered quandles is natural.  Let $(X,*, \prec_X )$ and $(Y,\diamond, \prec_Y)$ be two right ordered quandles.  A map $f:X \rightarrow Y$ is called an order preserving homomorphism of right ordered quandles if $f$ is a quandle homomorphism (that is,$f(x*y)=f(x) \diamond f(y), \forall x,y \in X$) and $ x \prec_X y$ implies $f(x) \prec_Y f(y)$.

\section{Circular orderability of quandles}\label{Circular}

Recall that a {\it circular ordering} on a set $S$ is a map $c: S\times S\times S \rightarrow \{ -1, 0, 1\}$ satisfying:
\begin{enumerate}
	\item If $(t_1, t_2, t_3) \in S^3$ then $c(t_1, t_2, t_3) = 0$ if and only if $\{t_1, t_2, t_3\}$ are not all distinct;
	\item For all $t_1, t_2, t_3, t_4 \in S$ we have
	\[ c(t_1, t_2, t_3) - c(t_1, t_2, t_4) + c(t_1, t_3, t_4)-c(t_2, t_3, t_4) = 0.
	\] 
\end{enumerate}
A set $S$ with a circular ordering is called {\it circularly orderable}. A group $G$ is called {\it left-circularly orderable} (respectively {\it right-circularly orderable}) if it admits a circular ordering $c: G\times G\times G \rightarrow \{ - 1, 0, 1\}$ such that $c(g_1,g_2, g_3)=c(gg_1,gg_2,gg_3)$ (respectively $c(g_1,g_2, g_3)=c(g_1g,g_2g,g_3g)$) for any $g, g_1, g_2$ and $g_3\in G$. In this case we say that the circular ordering $c$ is left-invariant (respectively right-invariant). 

The notion of circular orderability for quandles was defined in \cite{BE}. A quandle $Q$ is {\it left-circularly orderable} (respectively {\it right-circularly orderable}) if it admits a circular ordering $c: Q\times Q\times Q \rightarrow \{ - 1, 0, 1\}$ such that $c(q_1,q_2, q_3)=c(q*q_1,q*q_2,q*q_3)$ (respectively $c(q_1,q_2, q_3)=c(q_1*q,q_2*q,q_3*q)$) for any $q, q_1, q_2$ and $q_3\in Q$. 

Recall that if a quandle $Q$ is right (respectively left) orderable with a right or left ordering $\prec$, then it is also right (respectively left) circularly orderable \cite{BE} with a circular order $c_<$ defined by 
$$\;\;\;\; c_{\prec}(q_1,q_2,q_3)=\left\{\begin{array}{cl} 1 & \text{if $q_1\prec q_2\prec q_3$ or $q_2\prec q_3\prec q_1$ or $q_3\prec q_1\prec q_2$}
		\\ -1  &  \text{if $q_1\prec q_3\prec q_2$ or $q_3\prec q_2\prec q_1$ or $q_2\prec q_1\prec q_3$  }  
		\\ 0 & \text{otherwise.}
	\end{array}
	\right.$$ 
We call this circular ordering $c_{\prec}$ a {\it secret right (respectively left) ordering}.

Recall that the circle $\mathbb{S}^1$ is right orderable as a trivial quandle with the quandle conjugation operation. Therefore, it is also right circularly orderable. Below, we give uncountable many right and left circular orderings of the circle.

\begin{example}\label{S1}
	Fix a real number $t$ such that $0<t<1$ and consider the quandle operation on the circle $\mathbb{S}^1$ given by
	\[
	e^{i \lambda}*e^{i \mu}=e^{i[t \lambda +(1-t)\mu]}.
	\]
	Let $c$ be the map $c:\mathbb{S}^1 \times \mathbb{S}^1 \times \mathbb{S}^1 \rightarrow \{ - 1, 0, 1\}$ given by:
	$$(1)\;\;\;\; c(e^{i \theta},e^{i \phi},e^{i \psi})=\left\{\begin{array}{cl} 1 & \text{if $\theta <\phi<\psi$ or $\phi<\psi<\theta$ or $\psi<\theta<\phi $}
		\\ -1  &  \text{if $\theta < \psi< \phi$ or $\psi < \phi < \theta$ or $\phi < \theta < \psi$  }  
		\\ 0 & \text{otherwise}
	\end{array}
	\right.$$
	for any reals $\theta, \phi$ and $\psi\in [0, 2\pi)$.  A direct computation gives that $c$ is both right and left circular ordering on the quandle $\mathbb{S}^1$.  
\end{example}

Let $Q$ and $T$ be two quandles. An {\it action} of $Q$ on $T$ is a quandle homomorphism $$\varrho:Q\longrightarrow \mathrm{Conj}(\mathrm{Aut}(T))$$
where $\mathrm{Aut}(T)$ is the group of quandle automorphisms of $T$ and recall that the quandle operation on $\mathrm{Conj}(\mathrm{Aut}(T))$ is given by $f*g=gfg^{-1}$ for any $f, g \in \mathrm{Aut}(T)$.  We thus have $\varrho(x*y)=\varrho(y) \varrho(x) \varrho(y)^{-1}$ which equivalent to $\varrho(x*y) \varrho(y)=\varrho(y) \varrho(x)$. 

Let $A$ be non nonempty set. Then $A$ with the binary operation $a*b=a$ for any $a, b\in A$ is a trivial quandle. We then have that $\mathrm{Aut}(A)=S_A$ where $S_A$ the symmetric group on $A$, and this give the definition of the action of a quandle on a set $A$ ( see for example \cite{EM, EM2, RSS}).

Assume that $\varrho:Q\longrightarrow \mathrm{Conj}(\mathrm{Aut}(T))$ is an action of a quandle $Q$ on a quandle $T$.
For any $x\in T$, we call the set $Q_x:=\{p \in Q\;|\; \varrho(p)(x)=\varrho(q)(x),\; {\rm for\; some\;} q\in Q\setminus\{p\}\}$ the {\it stablizer} of $x$. 
\begin{theorem}
Let $Q$ be a quandle, then the following statement are true:
\begin{enumerate}
    \item If $Q$ is semi-latin and right circularly orderable, then it acts faithfully on a circularly ordered set $X$ by circular order preserving bijections;
    \item If $Q$ acts faithfully on $\mathbb{S}^1$ by right circular order preserving bijections and there exists a point $t\in \mathbb{S}^1$ such that the stabilizer $Q_t$ is empty then it is right circularly orderable.
    
\end{enumerate}
\end{theorem}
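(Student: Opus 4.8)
We argue the two parts separately.

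\emph{Part (1).} The plan is to let $Q$ act on \emph{itself}, carrying the given right-invariant circular ordering. Fix a right-invariant circular ordering $c$ on $Q$ witnessing right circular orderability. For $s\in Q$ the right translation $R_s\colon t\mapsto t*s$ is an automorphism of $Q$ by axioms (2) and (3), and axiom (3) rewrites as $R_{a*b}=R_b\circ R_a\circ R_b^{-1}$; hence $\varrho\colon Q\to \mathrm{Conj}(\mathrm{Aut}(Q))$, $s\mapsto R_s$, is a quandle homomorphism, i.e.\ an action of $Q$ on the set $Q$. First I would verify this action is faithful using the semi-latin hypothesis: if $R_a=R_b$ then $b*a=R_a(b)=R_b(b)=b*b=b$, so $L_b(a)=L_b(b)$ and injectivity of $L_b$ forces $a=b$. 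Finally, each $R_s$ is a circular-order-preserving bijection of $(Q,c)$, since the identity $c(q_1*s,q_2*s,q_3*s)=c(q_1,q_2,q_3)$ is precisely right-invariance of $c$. Thus $Q$ acts faithfully on the circularly ordered set $X=(Q,c)$ by circular-order-preserving bijections. There is essentially no obstacle here; the only points to get right are the bookkeeping identity $R_{a*b}=R_b R_a R_b^{-1}$ and the fact that semi-latinness is exactly what rules out $R_a=R_b$ for $a\ne b$.

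\emph{Part (2).} Now let $\varrho\colon Q\to\mathrm{Conj}(\mathrm{Aut}(\mathbb{S}^1))$ be a faithful action in which every $\varrho(q)$ preserves the circular ordering $c$ of $\mathbb{S}^1$, and fix $t\in\mathbb{S}^1$ with $Q_t=\emptyset$. The plan is to pull the circular ordering of $\mathbb{S}^1$ back to $Q$ along the orbit map. Step one: $Q_t=\emptyset$ says exactly that $\iota\colon Q\to\mathbb{S}^1$, $\iota(q)=\varrho(q)(t)$, is injective. Step two: set $c_Q(q_1,q_2,q_3):=c(\iota(q_1),\iota(q_2),\iota(q_3))$. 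Injectivity of $\iota$ gives $c_Q(q_1,q_2,q_3)=0$ iff $q_1,q_2,q_3$ are not pairwise distinct, and the four-term cocycle identity for $c_Q$ is inherited term-by-term from that of $c$; so $c_Q$ is a genuine circular ordering on $Q$. Step three, the heart of the proof: verify right-invariance. Using $\varrho(q_i*q)=\varrho(q)\,\varrho(q_i)\,\varrho(q)^{-1}$ together with the fact that $\varrho(q)^{\pm 1}$ preserve $c$, one reduces
\[
c_Q(q_1*q,q_2*q,q_3*q)=c\big(\varrho(q_1)(s),\varrho(q_2)(s),\varrho(q_3)(s)\big),\qquad s=\varrho(q)^{-1}(t),
\]
so it remains to see this equals $c\big(\varrho(q_1)(t),\varrho(q_2)(t),\varrho(q_3)(t)\big)=c_Q(q_1,q_2,q_3)$; that is, the circular order that the evaluation maps induce on a triple of elements of $Q$ must be independent of whether one evaluates at $t$ or at the translate $s$. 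I would try to establish this by observing that $u\mapsto c(\varrho(q_1)(u),\varrho(q_2)(u),\varrho(q_3)(u))$ only changes value across a coincidence $\varrho(q_i)(u)=\varrho(q_j)(u)$; that no coincidence occurs at $u=t$ or at $u=s$, which follows from injectivity of $\iota$ propagated through the action and the closure of $Q$ under $*$ and $*^{-1}$; and then — the crux — that $t$ and $s$ may be joined without meeting a coincidence, so the two values agree. With this, $c_Q$ is right-invariant and $(Q,c_Q)$ is a right-circularly-ordered quandle.

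The main obstacle is precisely this last implication in Step three. In the group setting, pulling back a circular ordering along an orbit map automatically yields a one-sided-invariant ordering, because the acting element is cancelled from a single side; here the quandle acts by conjugation, so $\varrho(q)$ reappears on both sides and can only be removed using order-preservation, which is what forces the base-point-dependence question to be confronted head-on. That is also the place where the hypothesis $Q_t=\emptyset$ is genuinely used — not merely to keep $c_Q$ nondegenerate, but to control coincidences of the evaluation maps. A possible alternative to Step three is to pass to the group $G=\langle\varrho(Q)\rangle\le\mathrm{Aut}(\mathbb{S}^1,c)$, view $Q$ as a subquandle of $\mathrm{Conj}(G)$, and apply the quandle version of Proposition 3.3 of \cite{BE}, but the pullback argument is the more direct route.
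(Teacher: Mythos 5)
Your part (1) is correct and is essentially the paper's argument: act on $X=Q$ itself via $\varrho(s)=R_s$, observe that right-invariance of the circular ordering says exactly that each $R_s$ is a circular-order-preserving bijection, and use semi-latinness for faithfulness (the paper deduces $p=q$ from $x*p=x*q$ for all $x$, just as you do after specializing $x$).

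For part (2) you also follow the paper's construction --- pull the circular ordering $d$ back along $q\mapsto\varrho(q)(t)$ and use $Q_t=\emptyset$ to see the pullback is nondegenerate --- but the step you yourself single out as the crux is a genuine gap, and your proposed repair does not close it. After the (correct) reduction
\[
c_Q(q_1*q,q_2*q,q_3*q)=d\bigl(\varrho(q_1)(s),\varrho(q_2)(s),\varrho(q_3)(s)\bigr),\qquad s=\varrho(q)^{-1}(t),
\]
you must show the right-hand side is unchanged when $s$ is replaced by $t$. Your sketch argues that $u\mapsto d(\varrho(q_1)(u),\varrho(q_2)(u),\varrho(q_3)(u))$ can only change value across a coincidence $\varrho(q_i)(u)=\varrho(q_j)(u)$, and that $t$ can be joined to $s$ avoiding coincidences. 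Neither half is available: the $\varrho(q_i)$ are merely bijections of $\mathbb{S}^1$ (no continuity is assumed anywhere in the statement, so there is no ``locally constant off the coincidence locus'' principle), and even granting continuity, the hypothesis $Q_t=\emptyset$ only forbids coincidences over the single point $t$ --- it says nothing about the points of either arc from $t$ to $s$, so the coincidence locus may well separate them. Indeed, for three generic circular-order-preserving bijections $f_1,f_2,f_3$ the cyclic order of $(f_1(u),f_2(u),f_3(u))$ genuinely depends on $u$, so any correct argument must exploit that the $f_i$ and the pair $t,s$ all arise from a single quandle action; your write-up does not do this. For comparison, the paper's own proof stops exactly where you do: having derived nondegeneracy from $Q_t=\emptyset$, it asserts the right-invariance in a single sentence (``since $d$ is a right circular ordering on $\mathbb{S}^1$, the map $c$ is a right circular ordering on $Q$'') without supplying the base-point-independence argument. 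So you have correctly located the delicate point, but as written your Step three does not constitute a proof of it.
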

\begin{proof}
$(1)$ Assume that $Q$  is semi-latin and right circularly orderable with a right circular ordering $c$. We define the map 
$$\varrho:Q\longrightarrow \mathrm{Conj}(\mathrm{Aut}(Q))$$ by $\varrho(q)=R_q$ for any $q\in Q$. Since $R_q$ is an automorphism for any $q\in Q$, the map $\varrho$ is an action of $Q$ on $X=Q$. Let $q, x,y,z \in Q$.
 Then $c(\varrho(q)(x), \varrho(q)(y), \varrho(q)(z))=c(x*q, y*q, z*q)=c(x, y, z)$. Now, we need to show that $\varrho$ is injective. If $p, q \in Q$ such that $\varrho(p)=\varrho(q)$, then $x*p=x*q$ for any $x\in Q$, which implies that $p=q$ since the quandle is semi-latin.

$(2)$ Fix a right circular ordering $d$ on $\mathbb{S}^1$ and assume that $Q$ acts faithfully on $\mathbb{S}^1$ by right circular order preserving bijections.   Suppose there exists a point $t\in \mathbb{S}^1$ such that the stabilizer $Q_t$ is empty. Then define the map 
$c:Q\times Q\times Q\longrightarrow \{-1, 0, 1\}$ as follows,

$$\;\;\;\; c(x, y, z)=\left\{\begin{array}{cl} 1 & \text{if $d(\varrho(x)(t), \varrho(y)(t), \varrho(z)(t))=1 $}
	\\ -1  &  \text{if $d(\varrho(x)(t), \varrho(z)(t), \varrho(y)(t))=-1$  }  
	\\ 0 & \text{otherwise}
\end{array}
\right.$$

for any $x, y$ and $z\in Q$. Since the action $\varrho$ is faithful and the stabilizer set $Q_t$ is empty, then $x, y$ and $z\in Q$ are all distinct if and only if $\varrho(x)(t), \varrho(y)(t)$ and $\varrho(z)(t)\in \mathbb{S}^1$ are all distinct. Since $d$ is a right circular ordering on $\mathbb{S}^1$, the map $c$ is a right circular ordering on $Q$.
\end{proof}

\section{Circular orderability of link quandles}\label{Sec4}
For two distinct elements $x$ and $y$ of a quandle $(X,*)$ and a positive integer $i$, we denote $ y*_i x:=(R_x)^i(y)$.  Now we have the following lemma.

\begin{lemma}\label{triv}
	Let $(X,*)$ be a quandle.  Assume that the map $c:X \times X \times X \rightarrow \{-1,0,1\}$ is a right invariant circular ordering on $X$.  Let $x, y \in X$ be two distinct elements. 
 Then for any positive integer $i$, we have
	\[
	c(x,y,y*_i x)= c(x,y,y*_{i+1}x).
	\] 
\end{lemma}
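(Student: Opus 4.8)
The plan is to exploit that right-invariance of $c$ is precisely the statement that the right multiplication $R_x$ preserves the circular ordering $c$, together with the cocycle identity and the fact that $c$ takes only the three values $-1,0,1$. Writing $a_i := y*_i x = R_x^{\,i}(y)$, so that $a_0 = y$ and $a_{i+1} = R_x(a_i)$, I would first dispose of the degenerate case $y*x = y$: then $a_i = y$ for every $i$, and both sides of the claimed identity equal $c(x,y,y)=0$.

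So assume $a_0\neq a_1$. I would next check that $x, a_0, a_1$ are pairwise distinct: $x\neq a_0$ is the hypothesis, $a_0\neq a_1$ is the case assumption, and $a_1 = x$ would force $a_0 = R_x^{-1}(x) = x$ since $R_x$ fixes $x$, a contradiction. Hence $\varepsilon := c(x,a_0,a_1)\in\{1,-1\}$ is nonzero. The key preliminary observation, obtained by applying $R_x$ to the triple $(x,a_0,a_1)$ exactly $j$ times and invoking right-invariance together with $R_x(x)=x$, is that $c(x,a_j,a_{j+1}) = \varepsilon$ for every $j\geq 0$.

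Then I would prove by induction on $i\geq 1$ that $c(x,a_0,a_i)=\varepsilon$; the case $i=1$ is the definition of $\varepsilon$, and the statement for all $i\geq 1$ immediately yields the lemma. For the inductive step, apply the cocycle condition to the quadruple $(t_1,t_2,t_3,t_4) = (x, a_0, a_i, a_{i+1})$ and substitute the inductive hypothesis $c(x,a_0,a_i)=\varepsilon$ and the preliminary observation $c(x,a_i,a_{i+1})=\varepsilon$; this expresses $c(x,a_0,a_{i+1})$ as $2\varepsilon - c(a_0,a_i,a_{i+1})$. Since the left-hand side must lie in $\{-1,0,1\}$ and $c(a_0,a_i,a_{i+1})$ does too, the sign of $\varepsilon$ forces $c(a_0,a_i,a_{i+1}) = \varepsilon$ and hence $c(x,a_0,a_{i+1}) = \varepsilon$, closing the induction.

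I expect the only genuine subtlety to be recognizing that the range restriction $c\in\{-1,0,1\}$ is what actually closes the argument: without it one is tempted to try to evaluate $c(a_0,a_i,a_{i+1})$ directly, which merely reintroduces the same difficulty one index further along. Everything else — the degenerate case, the distinctness checks, and the bookkeeping with the iterates $R_x^{\,j}$ — is routine. An essentially equivalent alternative route would be to observe that $R_x$ fixes $x$ and preserves $c$, hence preserves the strict linear order on $X\setminus\{x\}$ obtained by cutting the circle at $x$ (namely $a <_x b \iff c(x,a,b)=1$, transitivity of which is again a short consequence of the cocycle identity and the $\{-1,0,1\}$ constraint); then $(a_i)_{i\geq 0}$ is a monotone sequence in this order, so $c(x,a_0,a_i)$ is constant for $i\geq 1$.
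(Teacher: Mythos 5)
Your proof is correct and follows essentially the same route as the paper's: the same cocycle quadruple $(x,y,y*_i x,y*_{i+1}x)$, the same use of right-invariance together with $x*x=x$ to identify $c(x,y*_i x,y*_{i+1}x)$ with $c(x,y,y*x)$, and the same appeal to the value set $\{-1,0,1\}$ to close the induction. If anything, you are slightly more careful than the paper, which silently passes over the degenerate case $y*x=y$ (where the common value is $0$) that you dispose of at the outset.
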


\begin{proof}
The cocycle condition gives
$$c(x, y, y*x)-c(x, y, y*_2x)+c(x, y*x, y*_2x)-c(y, y*x, y*_2x)=0$$ which implies that
$$2c(x, y, y*x)-c(x, y, y*_2x)-c(y, y*x, y*_2x)=0.$$
Hence, we have that $c(x, y, y*x)=c(x, y, y*_2x)$.
By induction assume that $i>2$ and $$c(x, y, y*x)=c(x, y, y*_2x)=c(x, y, y*_3x)=\cdots =c(x, y, y*_ix).$$ The cocycle condition applied to the $4$-tuple $(x,y,y*_i x,y*_{i+1}x)$ gives the equation 
	\[
	c(x,y,y*_i x) -c(x,y,y*_{i+1} x) + c(x,y*_i x,y*_{i+1} x) -c(y,y*_i x,y*_{i+1} x)=0.
	\]
Therefore, using the right invariance of the map $c$, we can re-write this equation in the following form
		\[
	c(x,y,y*_i x) -c(x,y,y*_{i+1} x) + c(x,y,y*x) -c(y,y*_i x,y*_{i+1} x)=0.
	\]
	The induction hypothesis implies that 
	\[
	c(x,y,y*_i x) =c(x,y,y*_{i+1} x) = c(x,y,y*x) =c(y,y*_ix,y*_{i+1} x).
	\]
	This finishes the proof.
	
\end{proof}

\begin{theorem}\label{rcoinfinite}
    A nontrivial right circularly orderable quandle is infinite.
\end{theorem}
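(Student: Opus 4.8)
The plan is to argue by contradiction, with Lemma~\ref{triv} as the engine. Suppose $Q$ is a nontrivial right circularly orderable quandle with right invariant circular ordering $c$, and suppose toward a contradiction that $Q$ is finite. First I would extract from nontriviality a suitable pair of elements. Since $Q$ is not trivial, some element $x$ fails to be a stabilizer element, so there is $y\in Q$ with $y*x\neq y$. Note that $x\neq y$, since otherwise $y*x=y*y=y$ by the idempotency axiom; and also $y*x\neq x$, since $R_x$ is injective and $y*x=x=x*x$ would force $y=x$. Hence $x$, $y$ and $y*x$ are three pairwise distinct elements of $Q$, so axiom~(1) in the definition of a circular ordering gives $c(x,y,y*x)=\varepsilon$ for some $\varepsilon\in\{-1,1\}$.

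Next I would transport this nonzero value around the entire $R_x$-orbit of $y$. Since $Q$ is finite, the automorphism $R_x$ has finite order, so there is an integer $N\geq 1$ with $(R_x)^N=\mathrm{id}_Q$; in the notation of the excerpt this reads $y*_N x=y$. Applying Lemma~\ref{triv} repeatedly for $i=1,2,\dots,N-1$ yields
\[
c(x,y,y*x)=c(x,y,y*_1 x)=c(x,y,y*_2 x)=\cdots=c(x,y,y*_N x).
\]
But $y*_N x=y$, so the right-hand triple is $(x,y,y)$, whose entries are not all distinct; hence $c(x,y,y*_N x)=0$ by axiom~(1) of a circular ordering. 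This contradicts $c(x,y,y*x)=\varepsilon\neq 0$, and the contradiction shows that $Q$ cannot be finite.

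The step I expect to require the most care is the realization that Lemma~\ref{triv} remains valid even when $y*_i x$ coincides with $x$ or with $y$, so that the triple $(x,y,y*_i x)$ is degenerate: this is precisely what allows the nonzero value $\varepsilon$ to be carried all the way around the orbit of $y$ under $R_x$ until it returns to $y$, where the circular ordering is forced to vanish. Since Lemma~\ref{triv} is proved purely from the cocycle identity and right invariance, neither of which needs the entries to be distinct, this is not an actual obstruction, but it is the conceptual crux of the argument. The only remaining routine point is that an automorphism of a finite quandle has finite order, which is immediate because $\mathrm{Aut}(Q)$ is then a finite group.
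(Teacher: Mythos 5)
Your proof is correct and follows essentially the same route as the paper: both use Lemma~\ref{triv} to propagate the nonzero value $c(x,y,y*x)$ along the $R_x$-orbit of $y$ and conclude that the orbit cannot close up. You are in fact somewhat more careful than the paper's own proof, which takes arbitrary distinct $x,y$ and asserts $c(x,y,y*x)\neq 0$ from nontriviality alone, whereas you correctly select the pair so that $y*x\neq y$ and verify that $x$, $y$, $y*x$ are pairwise distinct before invoking the contradiction with $(R_x)^N=\mathrm{id}_Q$.
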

\begin{proof}
    Let $X$ be a nontrivial right circularly orderable quandle with a circularly ordering $c$. Let $x$ and $y$ be two distinct elements of $X$. Then by Lemma~\ref{triv}, for any positive integer $i$, we have that 
    $$c(x, y, y*x)=c(x, y, y*_2x)=c(x, y, y*_3x)=\cdots =c(x, y, y*_ix).$$ Since $x$ and $y$ are distinct and $X$ is nontrivial, we have 
    $$0\neq c(x, y, y*x)=c(x, y, y*_2x)=c(x, y, y*_3x)=\cdots =c(x, y, y*_ix).$$
    Therefore, the quandle $X$ is infinite.
    \end{proof}
    
Therefore, we recover a result in \cite{BPS2}, \cite{DDHPV} and \cite{RSS} as:

\begin{corollary}
    A nontrivial right orderable quandle is infinite.
\end{corollary}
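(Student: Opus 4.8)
The plan is to deduce this immediately from Theorem~\ref{rcoinfinite} together with the ``secret ordering'' construction recalled in Section~\ref{Circular}. Concretely, suppose $(Q,*)$ is a nontrivial right orderable quandle, witnessed by a right ordering $\prec$. The first step is to invoke the fact (from \cite{BE}, recalled above) that the associated map $c_{\prec}$ is a right-invariant circular ordering on $Q$; this is what makes every right orderable quandle right circularly orderable. Since $Q$ is the same underlying quandle, it remains nontrivial. Theorem~\ref{rcoinfinite} then applies verbatim and yields that $Q$ is infinite.

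Alternatively, one can give a self-contained argument that bypasses the language of circular orderings but uses the same mechanism as Lemma~\ref{triv}. Pick two distinct elements $x,y\in Q$; since $Q$ is nontrivial and right orderable, after possibly relabeling we may assume $x \prec y$ or handle the two cases symmetrically. The key observation is that right invariance forces the iterates $y, y*x, y*_2 x, y*_3 x, \dots$ to be strictly monotone with respect to $\prec$: applying $R_x$ is an order-preserving bijection, so from $y \ne y*x$ (which holds because $Q$ is nontrivial, so $x$ is not a stabilizer of $y$) one gets either $y \prec y*x \prec y*_2 x \prec \cdots$ or the reversed chain, and in either case the elements $\{y*_i x : i \ge 0\}$ are pairwise distinct. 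Hence $Q$ is infinite. This mirrors exactly the recursive phenomenon of Lemma~\ref{triv}, now read through the totally ordered (rather than circularly ordered) lens.

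There is essentially no obstacle here: the corollary is a direct specialization, and the only point requiring a word of care is verifying that $x$ fails to be a stabilizer of $y$ for \emph{some} choice of distinct $x,y$ — but this is exactly the meaning of nontriviality, since if every element acted as a stabilizer on every other element the quandle would be trivial. I would therefore present the proof in one or two lines using the first route (cite the secret ordering, cite Theorem~\ref{rcoinfinite}), and optionally remark that the second route gives an order-theoretic proof not mentioning circular orders, thereby making transparent why this recovers the results of \cite{BPS2}, \cite{DDHPV} and \cite{RSS}.
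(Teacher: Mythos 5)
Your first route is exactly the paper's (implicit) argument: the corollary is stated as an immediate consequence of Theorem~\ref{rcoinfinite} via the secret circular ordering $c_{\prec}$, and no further proof is given there. Your alternative order-theoretic argument is also correct (with the one caveat you already note: nontriviality only guarantees \emph{some} pair with $y*x\neq y$, not that an arbitrary pair works), so both versions are fine.
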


Recall \cite{Joyce} that an $n$-quandle is a quandle $(X,*)$ such that for all $x \in X, (R_x)^n$ is the identity map.  

\begin{theorem}\label{n-quandle}
	Let $n\geq 2$ be a positive integer and let $(X,*)$ be a $n$-quandle, then $X$ cannot be right circular orderable.
\end{theorem}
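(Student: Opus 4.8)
The plan is to combine the recursive identity of Lemma~\ref{triv} with the defining relation of an $n$-quandle, so that any right invariant circular ordering is forced to be degenerate on a triple which the quandle axioms forbid from being degenerate unless $X$ is trivial. Concretely, I would argue by contradiction: suppose $X$ is a nontrivial $n$-quandle carrying a right invariant circular ordering $c\colon X\times X\times X\to\{-1,0,1\}$, and fix two distinct elements $x,y\in X$. Applying Lemma~\ref{triv} successively for $i=1,2,\dots,n-1$ gives
\[
c(x,y,y*x)=c(x,y,y*_2x)=\cdots=c(x,y,y*_nx).
\]
Since $X$ is an $n$-quandle we have $(R_x)^n=\mathrm{id}_X$, hence $y*_nx=(R_x)^n(y)=y$, and therefore $c(x,y,y*_nx)=c(x,y,y)=0$ by the first axiom for a circular ordering. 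Combining the two displays yields $c(x,y,y*x)=0$, i.e.\ the three elements $x$, $y$, $y*x$ are not all distinct.

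Next I would identify the only coincidence that can occur and run it against the quandle axioms. As $x\neq y$, either $y*x=x$ or $y*x=y$. The first case is impossible, because $R_x$ is an automorphism of $X$ (in particular injective) with $R_x(x)=x$, so $R_x(y)=x$ would give $y=x$. Hence $y*x=y$ for every pair of distinct $x,y\in X$; together with the idempotency axiom $x*x=x$ this says $R_x=\mathrm{id}_X$ for every $x\in X$, so $X$ is trivial, contrary to assumption. Equivalently, and more concretely: nontriviality of $X$ supplies $a,b\in X$ with $b*a\neq b$, whence necessarily $a\neq b$, so the argument above yields $b*a\in\{a,b\}$, forcing $b*a=a=R_a(a)$, which contradicts injectivity of $R_a$.

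The argument involves no analytic difficulty — once Lemma~\ref{triv} is in hand it is a two-step algebraic deduction — and the only genuinely delicate point is the trivial quandle. I would therefore be explicit that the hypothesis must be read as "nontrivial $n$-quandle": what the proof actually shows is that a right circularly orderable $n$-quandle must be trivial, and the trivial quandles are honest $n$-quandles for every $n$ (for instance the circle $\mathbb{S}^1$ with its conjugation quandle structure, which is right orderable and hence right circularly orderable). For the intended application — the $n$-quandles $Q_n(L)$ of nontrivial links — nontriviality is automatic, so the stated conclusion is the operative one; it also refines Theorem~\ref{rcoinfinite}, since it rules out right circular orderability for every nontrivial $n$-quandle irrespective of its cardinality.
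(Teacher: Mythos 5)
Your argument follows the same route as the paper's: apply Lemma~\ref{triv} repeatedly, use $(R_x)^n=\mathrm{id}_X$ to identify $y*_nx$ with $y$, and conclude $c(x,y,y*x)=0$. The difference is that you carry the argument to completion where the paper stops at ``this gives a contradiction'' without justifying why $c(x,y,y*x)$ should be nonzero. Your analysis of that point is correct and is genuinely needed: $c(x,y,y*x)=0$ only says that $x$, $y$, $y*x$ are not all distinct, the case $y*x=x$ is excluded by injectivity of $R_x$ together with $R_x(x)=x$, and the remaining case $y*x=y$ for all pairs means precisely that $X$ is trivial. Since trivial quandles are $n$-quandles for every $n$ and can be right circularly orderable --- the paper itself observes just before Example~\ref{S1} that the circle with the trivial (conjugation) quandle structure is right orderable, hence right circularly orderable --- the theorem as stated is false without a nontriviality hypothesis, and what is actually proved is that a right circularly orderable $n$-quandle must be trivial. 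You are also right that this costs nothing in the application to $Q_n(L)$ in Section~\ref{Sec7}, where nontriviality is automatic. In short: same method, but your write-up closes a real gap in the paper's proof and corrects the hypothesis of the statement.
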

\begin{proof}  Assume that $(X,*)$ is a $n$-quandle.  By Lemma~\ref{triv}, we have the equality 
			\[
		c(x,y,y*x)= \dots= c(x,y,y*_{n-1}x) =c(x,y,y*_n x) =  c(x,y,y)=0.		\]
		This gives a contradiction.  Thus the $n$-quandle  $X$ cannot be right circular orderable.
\end{proof}

\begin{lemma}\label{lefttriv}
	Let $(X,*)$ be a quandle.  Assume that the map $c:X \times X \times X \rightarrow \{-1,0,1\}$ is a left invariant circular ordering on $X$.  Let $x, y \in X$ be two distinct elements. 
 Then for any positive integer $i$, we have
	\[
	c(x,y, L_x^i(y))= c(x,y,L_x^{i+1}(y)).
	\] 
\end{lemma}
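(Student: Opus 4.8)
The plan is to run the left-handed mirror of the proof of Lemma~\ref{triv}, with the left multiplication $L_x$ playing the role that $R_x$ played there. The only structural input needed is that axiom~(1) of a quandle gives $L_x(x)=x*x=x$, hence $L_x^{\,j}(x)=x$ for every $j\ge 0$; this is precisely what lets us push triples through the left-invariance $c(q_1,q_2,q_3)=c(q*q_1,q*q_2,q*q_3)$ of $c$, just as $R_x(x)=x$ was used together with right-invariance in Lemma~\ref{triv}. I would write $L_x^i(y):=(L_x)^i(y)$ and induct on $i\ge 1$.

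For the base case $i=1$, apply the cocycle condition to the $4$-tuple $(x,\,y,\,L_x(y),\,L_x^2(y))$:
\[
c(x,y,L_x(y))-c(x,y,L_x^2(y))+c(x,L_x(y),L_x^2(y))-c(y,L_x(y),L_x^2(y))=0.
\]
Since $x=L_x(x)$ and $L_x^2(y)=L_x(L_x(y))$, left-invariance gives $c(x,L_x(y),L_x^2(y))=c(L_x(x),L_x(y),L_x(L_x(y)))=c(x,y,L_x(y))$, so the relation collapses to $2\,c(x,y,L_x(y))-c(x,y,L_x^2(y))-c(y,L_x(y),L_x^2(y))=0$. For the inductive step, assume $c(x,y,L_x(y))=\dots=c(x,y,L_x^i(y))$ and apply the cocycle condition to $(x,\,y,\,L_x^i(y),\,L_x^{i+1}(y))$; now $x=L_x^i(x)$ and $L_x^{i+1}(y)=L_x^i(L_x(y))$, so applying left-invariance $i$ times yields $c(x,L_x^i(y),L_x^{i+1}(y))=c(x,y,L_x(y))$, which by the induction hypothesis equals $c(x,y,L_x^i(y))$. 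Hence again $2\,c(x,y,L_x^i(y))-c(x,y,L_x^{i+1}(y))-c(y,L_x^i(y),L_x^{i+1}(y))=0$.

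What remains is to extract $c(x,y,L_x^i(y))=c(x,y,L_x^{i+1}(y))$ from a relation of shape $2A=B+C$ with $A,B,C\in\{-1,0,1\}$; this is the only place a small case check is needed (the same point left implicit in Lemma~\ref{triv}), and I do not expect any real obstacle beyond it. If $A=\pm1$ then $B=C=A$ and we are done. If $A=c(x,y,L_x^i(y))=0$ then, by the induction hypothesis (or directly in the base case), $c(x,y,L_x(y))=0$, so by axiom~(1) of a circular ordering $x,y,L_x(y)$ are not all distinct; as $x\neq y$ this forces $L_x(y)\in\{x,y\}$, and using $L_x(x)=x$ we get $L_x^{\,j}(y)\in\{x,y\}$ for every $j\ge 1$, so every triple occurring above has a repeated entry and all terms vanish. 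Thus the genuine content of the lemma is the substitution $x=L_x^i(x)$ that converts the third cocycle term into $c(x,y,L_x(y))$ via left-invariance; the rest is bookkeeping identical to the right-handed case, and one could even note that Lemma~\ref{triv} and Lemma~\ref{lefttriv} are formally dual under replacing $*$ by its ``left'' operation.
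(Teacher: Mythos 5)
Your proof is correct and follows essentially the same route as the paper's: apply the cocycle identity to the $4$-tuple $(x,y,L_x^i(y),L_x^{i+1}(y))$, use left-invariance together with $x*x=x$ to convert the third term into $c(x,y,x*y)$, and induct on $i$. Your explicit case analysis for extracting $A=B=C$ from $2A=B+C$ when $A=0$ (via axiom (1) of a circular ordering forcing $L_x^{\,j}(y)\in\{x,y\}$) is a point the paper leaves implicit, so it is a welcome refinement rather than a deviation.
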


\begin{proof}
We have that 
$$c(x, y, x*y)-c(x, y, L_x^2(y))+c(x, x*y, L_x^2(y))-c(y, x*y, L_x^2(y))=0$$ which implies that
$$2c(x, y, x*y)-c(x, y, L_x^2(y))-c(y, x*y, L_x^2(y))=0.$$
Hence, we have that $c(x, y, x*y)=c(x, y, L_x^2(y))$.
By induction assume that $i>2$ and $$c(x, y, x*y)=c(x, y, L_x^2(y))=c(x, y, L_x^3(y))=\cdots =c(x, y, L_x^i(y)).$$ The cocycle condition applied to the $4$-tuple $(x,y,L_x^i(y),L_x^{i+1}(y))$ gives the equation 
	\[
	c(x,y,L_x^i(y)) -c(x,y,L_x^{i+1}(y)) + c(x,L_x^i(y), L_x^{i+1}(y)) -c(y, L_x^i(y), L_x^{i+1}(y))=0.
	\]
Therefore, using the left invariance of the map $c$, we can re-write this equation in the following form
		\[
	c(x,y,L_x^i(y)) -c(x,y,L_x^{i+1}(y)) + c(x,y,x*y) -c(y,L_x^i(y), L_x^{i+1}(y))=0.
	\]
	The induction hypothesis implies that 
	\[
	c(x,y, L_x^i(y)) =c(x,y, L_x^{i+1}(y)) = c(x,y, x*y) =c(y, L_x^i(x), L_x^{i+1}(y)).
	\]
	This finishes the proof.
	
\end{proof}

\begin{theorem}\label{lcoinfinite}
    A nontrivial left circularly orderable quandle is semi-latin and infinite.
\end{theorem}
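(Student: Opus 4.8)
The plan is to prove the two conclusions separately, establishing the \emph{semi-latin} property first and then feeding it into an argument for \emph{infinite} built on Lemma~\ref{lefttriv}. Fix a left-invariant circular ordering $c$ on the nontrivial quandle $X$. I would open with the elementary remark that $|X|\ge 3$: any quandle with at most two elements is trivial, since each right multiplication $R_s$ is a bijection of $X$ fixing $s$, and on a set of size $\le 2$ the identity is the only such bijection, forcing $t*s=t$ for all $s,t$. So nontriviality gives us a third element to work with.

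For the semi-latin property, I would argue by contradiction: suppose $L_x(u)=L_x(v)$ for some $x$ and some $u\ne v$. Using $|X|\ge 3$, pick $p\notin\{u,v\}$; then $p,u,v$ are pairwise distinct, so $c(p,u,v)\ne 0$ by axiom (1) of a circular ordering. On the other hand, left-invariance yields $c(p,u,v)=c(x*p,\,x*u,\,x*v)=c\bigl(x*p,\,L_x(u),\,L_x(v)\bigr)$, and here the last two arguments coincide, so this value is $0$ — a contradiction. Hence every $L_x$ is injective, i.e.\ $X$ is semi-latin.

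For infiniteness, I would first exhibit a pair $x\ne y$ with $x,y,x*y$ pairwise distinct. Fix any $y$; since $R_y$ is a bijection of $X$ fixing $y$ and $|X|\ge 2$, it is not constant, so there is $x$ with $x*y\ne y$, and necessarily $x\ne y$ (as $y*y=y$). By the semi-latin property just proved, $L_x$ is injective, so $x*y=L_x(y)\ne L_x(x)=x$. Thus $x,y,L_x(y)$ are pairwise distinct and $c\bigl(x,y,L_x(y)\bigr)\ne 0$. Now Lemma~\ref{lefttriv}, applied inductively over $i$, gives $c\bigl(x,y,L_x^{i}(y)\bigr)=c\bigl(x,y,L_x(y)\bigr)\ne 0$ for every $i\ge 1$, so in particular $L_x^{i}(y)\notin\{x,y\}$. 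Finally, injectivity of $L_x$ (hence of every iterate $L_x^{i}$) shows that $L_x^{i}(y)=L_x^{j}(y)$ with $i<j$ would force $y=L_x^{j-i}(y)$, which is false; therefore $\{L_x^{i}(y):i\ge 0\}$ is an infinite subset of $X$.

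I do not anticipate a genuine obstacle. The one point that needs care is the reduction, after applying left-invariance, to a triple with a repeated entry in the semi-latin step — this is precisely where the bound $|X|\ge 3$ is used — together with checking that the chosen $x,y$ meet the hypothesis ``$x\ne y$'' of Lemma~\ref{lefttriv}, which holds by construction. Everything else is bookkeeping with the circular-ordering axioms and the idempotence/automorphism axioms of a quandle.
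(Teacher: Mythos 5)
Your proof is correct and follows essentially the same route as the paper's: semi-latinity is deduced from left-invariance producing a triple with a repeated entry, and infiniteness from Lemma~\ref{lefttriv}. You fill in two details the paper leaves implicit --- using an auxiliary point $p$ (where the paper uses $x$ itself via $x*x=x$) in the semi-latin step, and the closing argument that injectivity of $L_x$ forces the orbit $\{L_x^{i}(y)\}_{i\ge 0}$ to be infinite --- but the underlying decomposition and key lemma are the same.
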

\begin{proof}
    Let $X$ be a nontrivial left circularly orderable quandle with a left circularly ordering $c$. 
    Let $x, y$ and $z$ be three distinct elements of $X$ such that $x*y=x*z$. Then we have $0\neq c(x, y, z)=c(x, x*y, x*z)=0$ which is a contraction. So, the quandle $X$ is semi-latin.
    Let $x$ and $y$ be two distinct elements of $X$.
    Then by Lemma~\ref{lefttriv}, for any positive integer $i$, we have that 
    $$c(x, y, x*y)=c(x, y, L_x^2(y))=c(x, y, L_x^3(y))=\cdots =c(x, y, L_x^i(y)).$$     
     Since $x$ and $y$ are distinct and $X$ is nontrivial, we have 
$$0\neq c(x, y, x*y)=c(x, y, L_x^2(y))=c(x, y, L_x^3(y))=\cdots =c(x, y, L_x^i(y)).$$
    Therefore, the quandle $X$ is infinite.
    \end{proof}

Therefore, we recover a result in \cite{BPS2} and \cite{RSS} as:

\begin{corollary}
    A nontrivial left orderable quandle is semi-latin and infinite.
\end{corollary}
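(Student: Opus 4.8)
The plan is to obtain this statement as an immediate corollary of Theorem~\ref{lcoinfinite}. I would first recall, from Section~\ref{Circular}, that every left ordering $\prec$ on a quandle $Q$ gives rise to a left-invariant circular ordering, namely the secret left ordering $c_\prec$. Hence a nontrivial left orderable quandle is, in particular, a nontrivial left circularly orderable quandle, and Theorem~\ref{lcoinfinite} then applies verbatim to give that it is semi-latin and infinite. Since being nontrivial refers only to the underlying quandle operation, it is preserved in this passage, so there is nothing further to check.

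For completeness I would also indicate the self-contained argument that bypasses circular orderings, which is in fact a line or two shorter. For semi-latinness: if $L_x(s)=L_x(t)$ with $s\neq t$, then after swapping $s$ and $t$ if necessary so that $s\prec t$, left invariance forces $x*s\prec x*t$, i.e. $L_x(s)\prec L_x(t)$, a contradiction; so every $L_x$ is injective. For infiniteness: as $Q$ is nontrivial it has at least two elements, so no right translation $R_y$ is constant (each $R_y$ is a bijection fixing $y$), which means $x*y\neq y$ for some $x,y$; now $L_x$ is order-preserving by left invariance, so iterating it on the strict inequality between $y$ and $x*y$ (say $y\prec x*y$, the other case being symmetric) yields the strictly increasing chain $y\prec x*y\prec L_x^2(y)\prec L_x^3(y)\prec\cdots$ whose terms are pairwise distinct, and therefore $Q$ is infinite.

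I do not expect any genuine obstacle here: the result is a corollary, and the only point requiring a moment's attention is the existence of a pair $x,y$ with $x*y\neq y$ in a nontrivial quandle. This is precisely the remark that a quandle right translation $R_y$, being an automorphism, cannot be constant once $|Q|\geq 2$ --- the same ingredient already used implicitly in the proof of Theorem~\ref{lcoinfinite}.
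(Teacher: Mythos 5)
Your proposal is correct and follows exactly the paper's (implicit) route: the corollary is deduced from Theorem~\ref{lcoinfinite} via the observation from Section~\ref{Circular} that a left ordering induces a left-invariant circular ordering. The supplementary direct argument you sketch is also sound, but it is not needed and does not change the fact that your main line of reasoning coincides with the paper's.
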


\begin{remark} By Theorem \ref{n-quandle}, 
	involutive quandles are not \emph{right circularly} orderable. Therefore, dihedral quandles are not circularly orderable. In \cite{RSS} it was shown that the fundamental quandle $Q(4_1)$ of the figure eight knot is right orderable.  Then from Lemma 3.1 of \cite{BE} we deduce that $Q(4_1)$ is right circularly orderable. Since $Q_2(4_1)$ is isomorphic to the dihedral quandle $R_5$, we can conclude that a homomorphic image of \emph{right circularly} orderable quandle may not be  \emph{right circularly} orderable.
\end{remark}

\section{Quandle extensions by $2$-cocycles and orderability}\label{Sec5}

Let $Q$ be a quandle and $S$ be a nonempty set. Define a binary operation $(s,x)*(t,y)=(\alpha_{x,y}(s,t), x*y)$ on $S\times Q$ where 
$\alpha :Q\times Q\longrightarrow \text{Fun}(S\times S, S)=S^{S\times S}$ is a function. Then, $S\times Q$ is a quandle if and only if $\alpha$ satisfies the following conditions:
\begin{enumerate}
    \item $\alpha_{x,x}(s,s)=s$ for all $x\in Q$
 and $s\in S$;
 \item $\alpha_{x,y}(- , s): S\longrightarrow S$ is a bijection for all $x,y\in Q$ and $s\in S$;
 \item $\alpha_{x*y,z}(\alpha_{x, y}(r,s), t)=\alpha_{x*z, y*z}(\alpha_{x, z}(r,t), \alpha_{y, z}(s, t))$ for all $x, y$ and $z\in Q$, and for all $r, s$ and $t\in S$.
 \end{enumerate}
 
The function $\alpha$ is called a {\it dynamical quandle cocycle}. This type of quandles constructed above are called {\it quandle extensions} of $Q$ with respect to the dynamical cocycle $\alpha$, and are denoted by $S\times_{\alpha}Q$ (see \cite{AG} and \cite{CEGS}).

In particular, assume that $S=A$ is an abelian group and that for any $ x, y \in Q,$ $\alpha_{x, y}(a,b)=\eta_{x,y}(a)+\tau_{x,y}(b) + \kappa_{x, y}$ for any $(a, b)\in A\times A$, where the map $\eta_{x,y}:A \rightarrow A$ is an automorphism of $A$, $\tau_{x,y}:A \rightarrow A$ is an endomorphism and $\kappa: Q\times Q\rightarrow A$ is a generalized $2$-cocycle.  Then it is known \cite{AG, CEGS} that $\alpha$ is dynamical cocycle if and only if $\eta$ and $\tau$ satisfy the following conditions, for all $x,y,z \in Q$, 
\begin{eqnarray}
	\eta_{x*y,z}\eta_{x,y} &=&  \eta_{x*z,y*z}\eta_{x,z},\label{eq1}\\
	\eta_{x*y,z}\tau_{x,y} &=&  \tau_{x*z,y*z}\eta_{y,z},\label{eq2}\\
	\tau_{x*y,z}&=& \eta_{x*z,y*z} \tau_{x,z}+\tau_{x*z,y*z}\tau_{y,z},\label{eq3}\\
	\tau_{x,x} + \eta_{x,x} &=&Id.\label{eq4}
	\end{eqnarray}
	
Let $<$ and $<'$ be two total orderings on $Q$ and $S$ respectively. Then the {\it lexicographic ordering} $\prec$ on $S\times_{\alpha}Q$ is defined as follows, for any $(r,x)$ and $(s,y)\in S\times Q$, $(r,x)\prec(s,y)$ if and only if $r<'s$ or $r=s$ and $x<y$.	
	
	\begin{definition}
		Let $(X,*)$ be quandle.  A \emph{module} $(A,\eta,\tau)$ over $X$ comprise of:
		\begin{itemize}
			\item 
			An abelian group $A_x$ for each element $x \in X$.
			\item
			An isomorphism $\eta_{x,y}:A_x \rightarrow A_{x*y}$ for any elements $x,y$ of $X$.
			\item
			An endomorphism $\tau_{x,y}:A_y \rightarrow A_{x*y}$ for any elements $x,y$ of $X$,	
		\end{itemize} 
		such that equations (\ref{eq1}), (\ref{eq2}), (\ref{eq3}) and  (\ref{eq4}) hold.  This can be also expressed by the following diagrams.

			\begin{eqnarray*}
		\xymatrix{
			A_x \ar[r]^{ \eta_{x,y}}  \ar[d]_ { \eta_{x,z} } & A_{x*y} \ar[d]^{ \eta_{x*y,z} }
			\\ A_{x*z} \ar[r]_ { \eta_{x*z,y*z} } & A_{(x*y)*z}
		}
		\end{eqnarray*}
		
			\begin{eqnarray*}
		\xymatrix{
			A_y \ar[r]^{ \tau_{x,y}}  \ar[d]_ { \eta_{y,z} } & A_{x*y} \ar[d]^{ \eta_{x*y,z} }
			\\ A_{x*z} \ar[r]_ { \tau_{x*z,y*z} } & A_{(x*y)*z}
		}
		\end{eqnarray*}
		
		 	\begin{eqnarray*}
		 	\xymatrix{
		 		A_z 	\ar[rrrr]^{\tau_{x*y,z}} \ar[dddd]_{\tau_{x,z}} \ar[rrdd]^{\tau_{y,z}} &&&& A_{(x*y) * z} \ar[dddd]^{Id}\\
		 		&&&& \\
		 		& & A_{y*z} \ar[rrdd]^{\tau_{x*z,y*z}} && \\
		 		&&&& \\
		 		A_{x*z} \ar[rrrr]^{\eta_{x*z,y*z}} &&&& A_{(x*z)*(y*z)}
		 	}
		 \end{eqnarray*}
	\end{definition}

\begin{remark}
	Notice that the abelian group $A_x$ corresponds to the orbit of $x$ under the action of the inner group $Inn(x)$, that is, if $x$ and $y$ are in the same orbit then $A_x$ is isomorphic to $A_y$.  For example, if the quandle $X$ is connected (only one orbit), then for any $x,y \in X$, we have $A_x \cong A_y$ and then we will denote $A_x$ by just $A$.
\end{remark}

\begin{example}
	Let $\Lambda = \mathbb{Z}[t,t^{-1}]$ denote the ring of Laurent polynomials. 
	Then any  $\Lambda$-module $M$ is an $X$-module for any quandle $X$,
	by $\eta_{x,y} (a)=ta$ and $\tau_{x,y} (b) = (1-t)b $
	for any $x,y \in X$.
\end{example}

\begin{proposition}
Let $Q$ be a quandle and $(A, <')$ be a right orderable subgroup of $\mathbb{R}$. If there exists a nonzero element $t$ of $A$ such that for any $ x, y \in Q,$ $\alpha_{x, y}(a, b)=ta+(1-t)b$ for any $a, b\in A$, is a dynamical cocycle, then 

\begin{enumerate}
    \item $A\times_{\alpha}Q$ is a right orderable quandle if $Q$ is right orderable and $t>0$.
    \item $A\times_{\alpha}Q$ is a left orderable quandle if $Q$ is left orderable and $t<1$.
\end{enumerate}
\end{proposition}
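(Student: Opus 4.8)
The plan is to put on $A\times_\alpha Q$ the lexicographic ordering $\prec$ described just before the statement, with the $A$-coordinate dominant: for $(r,x),(s,y)\in A\times Q$, declare $(r,x)\prec(s,y)$ iff $r<'s$, or $r=s$ and $x<y$, where $<$ is a right ordering of $Q$ (for part (1)) or a left ordering of $Q$ (for part (2)). Since $\alpha$ is assumed to be a dynamical cocycle, $A\times_\alpha Q$ is already a quandle with operation $(r,x)*(p,z)=\bigl(tr+(1-t)p,\;x*z\bigr)$, and $\prec$ is automatically a strict total order because $<'$ and $<$ are; so the whole content is to verify invariance of $\prec$ under this operation.

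For part (1), assume $Q$ is right orderable and $t>0$. Take $(r,x)\prec(s,y)$ and an arbitrary $(p,z)\in A\times Q$, and compare $(r,x)*(p,z)=(tr+(1-t)p,\,x*z)$ with $(s,y)*(p,z)=(ts+(1-t)p,\,y*z)$. If $r<'s$, then, $<'$ being the order $A$ inherits from $\mathbb{R}$ and $t$ a positive real, we get $tr+(1-t)p<'ts+(1-t)p$, so the first coordinates already force $(r,x)*(p,z)\prec(s,y)*(p,z)$. If instead $r=s$ and $x<y$, the first coordinates coincide and right-invariance of $<$ on $Q$ gives $x*z<y*z$, so again $(r,x)*(p,z)\prec(s,y)*(p,z)$. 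Hence $\prec$ is a right ordering of $A\times_\alpha Q$.

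Part (2) is the mirror image. Assume $Q$ is left orderable and $t<1$, so $1-t>0$. For $(r,x)\prec(s,y)$ and arbitrary $(p,z)$, compare $(p,z)*(r,x)=(tp+(1-t)r,\,z*x)$ with $(p,z)*(s,y)=(tp+(1-t)s,\,z*y)$. If $r<'s$ then $(1-t)r<'(1-t)s$ because $1-t>0$, so $tp+(1-t)r<'tp+(1-t)s$ and the first coordinates decide the comparison; if $r=s$ and $x<y$ then the first coordinates agree and left-invariance of $<$ on $Q$ gives $z*x<z*y$. Either way $(p,z)*(r,x)\prec(p,z)*(s,y)$, so $\prec$ is a left ordering of $A\times_\alpha Q$.

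I do not expect a genuine obstacle here; the proof is a direct verification. The two points needing care are: (i) one uses that $<'$ is the order induced from $\mathbb{R}$, so that multiplication by the positive constant $t$ (resp. $1-t$) preserves it — this is exactly where the hypotheses $t>0$ (resp. $t<1$) enter, and without a sign condition the first-coordinate step breaks; and (ii) the tie-breaking coordinate must carry a right ordering of $Q$ in part (1) and a left ordering of $Q$ in part (2), matching the side on which the extended operation acts on the $Q$-coordinate. One should also keep in mind that $tr+(1-t)p$ and its analogues genuinely lie in $A$, which is part of the standing assumption that $\alpha$ is a well-defined dynamical cocycle on $A\times_\alpha Q$.
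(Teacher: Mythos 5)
Your proof is correct and follows essentially the same route as the paper: equip $A\times_\alpha Q$ with the lexicographic ordering ($A$-coordinate dominant), then split into the two cases $r<'s$ and $r=s$ with $x<y$, using positivity of $t$ (resp.\ $1-t$) on the first coordinate and the right (resp.\ left) invariance of the ordering on $Q$ on the second. Your remark (i), that one is really using the order induced from $\mathbb{R}$ so that multiplication by a positive constant is order-preserving, is in fact slightly more careful than the paper's own justification at that step.
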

\begin{proof}
$(1)$ Assume that $Q$ admits a right ordering $<$ and $t>0$. Let $(a, x), (b, y)$ and $(c,z)\in A\times Q$ such that $(a,x)\prec (b, y)$. Then $(a,x)*(c,z)=(\alpha_{x,z}(a,c), x*z)=(ta+(1-t)c, x*z)$ and $(tb+(1-t)c, y*z)=(\alpha_{y,z}(b,c), y*z)=(b,y)*(c,z)$. We have two cases.

{\bf Case 1:} Assume $a<'b$. Then, since $A$ is Abelian, the ordering $<'$ is both left and right orderable, so $ta+(1-t)c<'tb+(1-t)c$ which implies $(a,x)*(c,z)=(\alpha_{x,z}(a,c), x*z)=(ta+(1-t)c, x*z)\prec (tb+(1-t)c, y*z)=(\alpha_{y,z}(b,c), y*z)=(b,y)*(c,z)$.

{\bf Case 2:} Assume $a=b$ and $x<y$. Then $x*z<y*z$ which implies $(a,x)*(c,z)=(\alpha_{x,z}(a,c), x*z)=(ta+(1-t)c, x*z)\prec (ta+(1-t)c, y*z)=(tb+(1-t)c, y*z)=(\alpha_{y,z}(b,c), y*z)=(b,y)*(c,z)$.

This complete the proof of the fact that $A\times_{\alpha} Q$ is right orderable.

$(2)$ A similar proof show that $A\times_{\alpha}Q$ is a left orderable quandle if $Q$ is left orderable and $t<1$.
\end{proof}

\begin{corollary}
Let $Q$ be a quandle and $(A, <')$ be a right orderable subgroup of $\mathbb{R}$. Assume there exists a nonzero element $t$ of $A$ such that for any $ x, y \in Q,$ $\alpha_{x, y}(a, b)=ta+(1-t)b+\kappa_{x, y}$ for any $a, b\in A$ and $x, y\in Q$, is a dynamical cocycle. Then, 

\begin{enumerate}
    \item $A\times_{\alpha}Q$ is a right orderable quandle if $Q$ is right orderable, $t>0$ and $\kappa_{x, z}=\kappa_{y, z}$ for any $x, y$ and $z\in Q$.
    \item $A\times_{\alpha}Q$ is a left orderable quandle if $Q$ is left orderable, $t<1$ and $\kappa_{z, x}=\kappa_{z, y}$ for any $x, y$ and $z\in Q$.
\end{enumerate}
\end{corollary}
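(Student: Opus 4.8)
The plan is to repeat, essentially verbatim, the proof of the preceding Proposition, the only new ingredient being the translation term $\kappa_{x,y}$; the two hypotheses imposed on $\kappa$ are precisely what is needed to make this term cancel when the two sides of the desired inequality are compared. Throughout, $A\times_{\alpha}Q$ is already a quandle since $\alpha$ is assumed to be a dynamical cocycle.

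For part (1) I would fix a right ordering $<$ on $Q$, take $<'$ to be the given right ordering on $A$ (the restriction of the usual order of $\mathbb{R}$), and equip $A\times_{\alpha}Q$ with the lexicographic order $\prec$ as in the paragraph preceding the Definition: $(r,x)\prec(s,y)$ iff $r<'s$, or $r=s$ and $x<y$. That $\prec$ is a strict total order is the standard fact about lexicographic orders, so the only thing to verify is right-invariance. Given $(a,x)\prec(b,y)$ and an arbitrary $(c,z)\in A\times Q$, one computes $(a,x)*(c,z)=(ta+(1-t)c+\kappa_{x,z},\,x*z)$ and $(b,y)*(c,z)=(tb+(1-t)c+\kappa_{y,z},\,y*z)$, and splits into the two cases coming from the definition of $\prec$. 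If $a<'b$, then $t>0$ gives $ta<'tb$ (multiplication by a positive real preserves $<'$, exactly as in the proof of the Proposition), and the hypothesis $\kappa_{x,z}=\kappa_{y,z}$ lets one add the common element $(1-t)c+\kappa_{x,z}=(1-t)c+\kappa_{y,z}$ and conclude that the first coordinates are $<'$-ordered, hence $(a,x)*(c,z)\prec(b,y)*(c,z)$. If instead $a=b$ and $x<y$, the equality $\kappa_{x,z}=\kappa_{y,z}$ forces the two first coordinates to coincide, while right-invariance of $<$ on $Q$ gives $x*z<y*z$; again $(a,x)*(c,z)\prec(b,y)*(c,z)$. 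This establishes (1).

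Part (2) is the dual argument: one takes a left ordering $<$ on $Q$, uses $t<1$ so that $1-t>0$ together with $\kappa_{z,x}=\kappa_{z,y}$, and compares $(c,z)*(a,x)=(tc+(1-t)a+\kappa_{z,x},\,z*x)$ with $(c,z)*(b,y)=(tc+(1-t)b+\kappa_{z,y},\,z*y)$ in the same two cases, the scaling of the varying coordinate now being by $1-t>0$. I do not expect any serious obstacle here; the one point worth recording is the bookkeeping of where the hypotheses enter: the sign condition on $t$ fixes the direction of the inequality when the varying first coordinate is scaled (by $t$ in part (1), by $1-t$ in part (2)), while the condition $\kappa_{x,z}=\kappa_{y,z}$ (resp. $\kappa_{z,x}=\kappa_{z,y}$) is exactly what makes the cocycle constant contribute identically to both sides and so disappear from the comparison — which is the feature that a general $2$-cocycle term $\kappa$ would otherwise obstruct.
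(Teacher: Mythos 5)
Your argument is correct and is exactly the intended one: the paper states this as a corollary without proof precisely because the argument is the proof of the preceding Proposition with the constant $\kappa_{x,z}=\kappa_{y,z}$ (resp.\ $\kappa_{z,x}=\kappa_{z,y}$) added to both sides and cancelling, which is what you carry out (and which the paper does write out explicitly in the analogous Proposition~\ref{proplo2} of Section~\ref{Sec6}). Your bookkeeping of where each hypothesis enters --- $t>0$ (resp.\ $1-t>0$) for the scaled coordinate and the $\kappa$-equality for the constant term --- matches the paper's treatment.
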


\begin{proposition}
Let $Q$ be a quandle and $(A, <)$ be a right orderable subgroup of $\mathbb{R}$. If there exists a nonzero element $t$ of $A$ such that for any $ x, y \in Q,$ $\alpha_{x, y}(a, b)=ta+(1-t)b$ for any $a, b\in A$, is a dynamical cocycle, then 
\begin{enumerate}
    \item $A\times_{\alpha}Q$ is a right circularly orderable quandle if $Q$ is right circularly orderable and $t>0$.
    \item $A\times_{\alpha}Q$ is a left circularly orderable quandle if $Q$ is left circularly orderable and $t<1$.
\end{enumerate}
\end{proposition}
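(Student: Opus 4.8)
The plan is to adapt the lexicographic construction from the linear-order version above to the circular setting. Fix a right-invariant circular ordering $c$ on $Q$ (it exists by hypothesis of (1)), and let $<$ denote the order on $A$ inherited from $\mathbb R$, with respect to which multiplication by a positive real is order-preserving. I will define a circular ordering $\hat c$ on $A\times_{\alpha}Q$ by ``blowing up'' each point of $Q$: replace a point $x\in Q$ by a tiny oriented arc and spread the fibre $\{x\}\times A$ along it in increasing $<$-order. Concretely, for three distinct elements $u_1=(a_1,x_1)$, $u_2=(a_2,x_2)$, $u_3=(a_3,x_3)$ of $A\times_{\alpha}Q$, set $\hat c(u_1,u_2,u_3)=c(x_1,x_2,x_3)$ if $x_1,x_2,x_3$ are pairwise distinct; set $\hat c(u_1,u_2,u_3)=c_<(a_1,a_2,a_3)$, the secret ordering induced by $<$, if $x_1=x_2=x_3$; and if exactly two of the $x_i$ coincide, let $\hat c(u_1,u_2,u_3)$ be the tie-break value forced by placing the two fibre coordinates over the repeated base point in increasing $<$-order on the blown-up arc (this value is independent of the third point).

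The first task is to verify that $\hat c$ is a circular ordering. Axiom (1) is immediate from the definition. For the cocycle identity, axiom (2), note that it involves only four points at a time, and any four elements of $A\times_{\alpha}Q$ lie over at most four base points of $Q$ and use at most four distinct fibre coordinates; such a finite configuration can be faithfully realized in $\mathbb S^1$ by choosing an order-embedding into the circle of the finitely many relevant points of $Q$ (possible, since $c$ restricts to a circular order on a finite set) and then blowing up each image point into a small sub-arc carrying the relevant fibre values in increasing $<$-order. On this finite subset $\hat c$ then coincides with the genuine circular order inherited from $\mathbb S^1$, so the cocycle identity holds there, hence everywhere. Setting up this realization carefully — in particular checking that the tie-break conventions across the sub-cases are mutually consistent and orientation-coherent — is the one part of the argument that requires real care, and I expect it to be the main obstacle; everything else is routine bookkeeping.

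It remains to check invariance. For (1): $(a,x)*(c,z)=(\alpha_{x,z}(a,c),\,x*z)=(ta+(1-t)c,\,x*z)$. Since $R_z$ is a bijection of $Q$, the pattern of coincidences among $x_1*z,x_2*z,x_3*z$ is the same as among $x_1,x_2,x_3$; in the all-distinct case $\hat c$ of the translated triple equals $c(x_1*z,x_2*z,x_3*z)=c(x_1,x_2,x_3)$ by right-invariance of $c$, and in the two remaining cases $\hat c$ is determined by $<$-comparisons of fibre coordinates, all of which are preserved by the strictly increasing affine map $u\mapsto tu+(1-t)c$ (strictly increasing because $t>0$). Hence $\hat c$ is right-invariant and $A\times_{\alpha}Q$ is right circularly orderable. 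For (2) one runs the mirror argument starting from a left-invariant circular ordering $c$ on $Q$: now $(c,z)*(a,x)=(tc+(1-t)a,\,z*x)$, the fibre map is $u\mapsto(1-t)u+tc$, which is strictly increasing exactly when $t<1$, and the coincidence pattern among the $z*x_i$ matches that among the $x_i$ because $L_z$ is injective, a nontrivial left circularly orderable quandle being semi-latin by Theorem~\ref{lcoinfinite}. This shows $\hat c$ is left-invariant, so $A\times_{\alpha}Q$ is left circularly orderable.
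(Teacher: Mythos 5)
Your construction is the same as the paper's: on triples of distinct elements of $A\times_{\alpha}Q$ you set $\hat c$ equal to the circular order of the base points when those are pairwise distinct, equal to the secret circular order $c_<$ of the fibre coordinates when the base points all coincide, and you break ties by the $<$-order of the two fibre coordinates over a repeated base point; this is exactly the map $c$ defined in the paper's proof, and your invariance check (the fibre map $u\mapsto tu+(1-t)s$ is strictly increasing for $t>0$, and $R_q$ is a bijection so coincidence patterns are preserved) matches the paper's as well. Where you differ is in verifying the cocycle identity: the paper does an explicit four-case computation on quadruples (organized by how many of the base points coincide), whereas you propose realizing any four elements on $\mathbb{S}^1$ by order-embedding the finitely many base points and blowing each up into an arc carrying the fibre values in increasing order, so that $\hat c$ agrees on that quadruple with a genuine circular order inherited from the circle. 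That realization argument is sound --- a finite circularly ordered set always embeds in $\mathbb{S}^1$, and the agreement of $\hat c$ with the blown-up order in each coincidence pattern is exactly the consistency check you flag --- and it has the advantage of treating all coincidence configurations uniformly, whereas the paper's case analysis only examines representative subcases. You should, however, actually carry out the agreement check rather than just announcing it as ``the main obstacle''; it is short (three subcases) and is the whole content of the proof. One further remark on part (2): your appeal to semi-latinness via Theorem~\ref{lcoinfinite} to control coincidences among the $q*x_i$ is a real point that the paper's ``a similar proof'' glosses over, and it quietly requires $Q$ to be nontrivial --- for a two-element trivial quandle $Q$ (which is vacuously left circularly orderable) the left multiplications on $A\times_{\alpha}Q$ fail to be injective and the conclusion of (2) actually fails, so your proof is, if anything, more honest about the hypotheses genuinely needed.
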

\begin{proof} $(1)$ Assume that $d$
 is a right circular ordering on $Q$ and $t>0$. Define a map $c:(A\times Q)^3\longrightarrow\{-1, 0, 1\}$ as follows: 
let $(a, x), (b, y)$ and $(r, z)$ be three elements of $A\times Q$.
\begin{itemize}
    \item Assume that the three elements $(a, x), (b, y)$ and $(r, z)$ are all distinct.
    \begin{enumerate}
    \item if $x$, $y$ and $z$ are all distinct, then $c((a, x), (b, y), (r, z))=d(x, y, z)$;
    \item if exactly two of the elements $x, y$ and $z$ are the same, without loss of generality and up to cyclic permutation assume that $x=y$, then $c((a, x), (b, y), (r, z))=1$ if $a<b$ and $c((a, x), (b, y), (r, z))=-1$ otherwise;
    \item if $x=y=z$ then $c((a, x), (b, y), (r, z))=c_<(a, b, r)$.
\end{enumerate}
\item If at least two of the elements $(a, x), (b, y)$ and $(r, z)$ are the same then $$c((a, x), (b, y), (r, z))=0.$$
\end{itemize}

Now let $(a, x), (b, y),$ $(r, z)$ and $(s, q)\in A\times Q$ be three distinct elements, then we have four 
 cases:

{\bf Case 1:} If $x, y, z$ and $q$ are all distinct, then
$c((a, x),(b, y), (r,z))-c((a,x),(b,y),(s,q))+ c((a,x), (r,z), (s,q))-c((b,y), (r,z), (s, q))=d(x, y, z)-d(x, y, q)+ d(x, z,   q)-d(y, z, q)=0.$ We have also that, $c((a, x)*(s, q),(b, y)*(s, q), (r,z)*(s, q))=c((ta+(1-t)s, x*q), (tb+(1-t)s, y*q), (tr+(1-t)s, z*q))=d(x*q, y*q, z*q)=d(x, y, z)=c((a, x),(b, y), (r,z))$.

{\bf Case 2:} If exactly two of $x, y, z$ and $q$ are the same. Without loss of generality assume that $x=y$. Then,
$c((a, x),(b, y), (r,z))-c((a,x),(b,y),(s,q))+ c((a,x), (r,z), (s,q))-c((b,y), (r,z), (s, q))=c((a, x),(b, y), (r,z))-c((a,x),(b,y),(s,q))+ d(x, z, q)-d(y, z, q)=c((a, x),(b, y), (r,z))-c((a,x),(b,y),(s,q))$. We have that two subcases:

{Subcase 1:} Assume that $a<b$. Then $c((a, x),(b, y), (r,z))-c((a,x),(b,y),(s,q))=1-1=0$. We have also that, $c((a, x)*(s, q),(b, y)*(s, q), (r,z)*(s, q))=c((ta+(1-t)s, x*q), (tb+(1-t)s, y*q), (tr+(1-t)s, z*q))=1=c((a, x),(b, y), (r,z))$.

{Subcase 2:} Assume that $b<a$. Then $c((a, x),(b, y), (r,z))-c((a,x),(b,y),(s,q))=-1+1=0$. We have also that, $c((a, x)*(s, q),(b, y)*(s, q), (r,z)*(s, q))=c((ta+(1-t)s, x*q), (tb+(1-t)s, y*q), (tr+(1-t)s, z*q))=-1=c((a, x),(b, y), (r,z))$.

{\bf Case 3:} If exactly three of $x, y, z$ and $q$ are the same. Without loss of generality assume that $x=y=z\neq q$. Then, $c((a, x)*(s, q),(b, y)*(s, q), (r,z)*(s, q))=c((ta+(1-t)s, x*q), (tb+(1-t)s, y*q), (tr+(1-t)s, z*q))=c_<(ta+(1-t)s, tb+(1-t)s, tr+(1-t)s)=1=c_<(a, b, r)=c((a, x),(b, y), (r,z))$. We have also that,

$\epsilon:=c((a, x),(b, y), (r,z))-c((a,x),(b,y),(s,q))+ c((a,x), (r,z), (s,q))\\ -c((b,y), (r,z), (s, q))=c_<(a, b, r)-c((a,x),(b,y),(s,q))+ c((a,x), (r,z), (s,q))-c((b,y), (r,z), (s, q))$. We have two subcases:

{Subcase 1:} Up to cyclic permutation we can assume that $a<b<r$. Then 
\begin{align*}
   \epsilon&= c_<(a, b, r)-c((a,x),(b,y),(s,q))+ c((a,x), (r,z), (s,q))-c((b,y), (r,z), (s, q))\\&=1-1+1-1=0. 
\end{align*}

{Subcase 2:} Up to cyclic permutation we can assume that $a<r<b$. Then 
\begin{align*}
   \epsilon&= c_<(a, b, r)-c((a,x),(b,y),(s,q))+ c((a,x), (r,z), (s,q))-c((b,y), (r,z), (s, q))\\&=-1-1+1+1=0. 
\end{align*}

{\bf Case 4:} If $x=y=z=q$ then $c((a, x),(b, y), (r,z))-c((a,x),(b,y),(s,q))+\\ c((a,x), (r,z), (s,q))-c((b,y), (r,z), (s, q))=c_<(a, b, r)-c_<(a, b, s)+ c_<(a, r,   s)-c_<(b, r, s)=0.$
This complete the proof to the fact that $A\times_{\alpha}Q$ is a right circularly orderable quandle if $Q$ is right circularly orderable and $t>0$.

(2) A similar proof show that $A\times_{\alpha}Q$ is a left circularly orderable quandle if $Q$ is left circularly orderable and $t<1$.
\end{proof}

\begin{corollary}
Let $Q$ be a quandle and $(A, <)$ be a right orderable subgroup of $\mathbb{R}$. Assume there exists a nonzero element $t$ of $A$ such that for any $ x, y \in Q,$ $\alpha_{x, y}(a, b)=ta+(1-t)b+\kappa_{x, y}$ for any $a, b\in A$ and $x, y\in Q$, is a dynamical cocycle. Then
\begin{enumerate}
    \item $A\times_{\alpha}Q$ is a right circularly orderable quandle if $Q$ is right circularly orderable, $t>0$ and $\kappa_{x, z}=\kappa_{y, z}$ for any $x, y$ and $z\in Q$.
    \item $A\times_{\alpha}Q$ is a left circularly orderable quandle if $Q$ is left circularly orderable, $t<1$ and $\kappa_{z, x}=\kappa_{z, y}$ for any $x, y$ and $z\in Q$.
\end{enumerate}
\end{corollary}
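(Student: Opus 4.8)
The plan is to follow the proof of the preceding Proposition almost verbatim, the point being that the hypothesis on $\kappa$ is exactly what is needed to make its contribution disappear. For part $(1)$ I would fix a right circular ordering $d$ on $Q$ and a right ordering $<$ on $A$, and assume $t>0$ together with $\kappa_{x,z}=\kappa_{y,z}$ for all $x,y,z\in Q$. Then define $c\colon(A\times Q)^3\to\{-1,0,1\}$ by the very same case analysis as in the Proposition: use $d$ on the $Q$-coordinates when those are all distinct; use the sign of the $A$-comparison when exactly two $Q$-coordinates coincide; use the secret ordering $c_<$ on $A$ when all three $Q$-coordinates coincide; and set $c=0$ whenever two of the triples agree. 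Because this map $c$ makes no reference to $\alpha$, the verification that $c$ satisfies the four-term cocycle identity is word-for-word the Cases $1$--$4$ computation of the Proposition, and need not be repeated.

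The only place where $\kappa$ enters is the check of right-invariance. For $(a,x),(b,y),(r,z),(s,q)\in A\times Q$ one has
\[
(a,x)*(s,q)=\bigl(ta+(1-t)s+\kappa_{x,q},\ x*q\bigr),
\]
and similarly for the other two factors. By hypothesis $\kappa_{x,q}$ does not depend on its first index, so $\kappa_{x,q}=\kappa_{y,q}=\kappa_{z,q}=:\kappa_q$; hence the three $A$-coordinates are obtained from $ta,tb,tr$ by one and the same affine map $u\mapsto tu+\bigl((1-t)s+\kappa_q\bigr)$. Since $<$ is a (translation-invariant) ordering of the abelian group $A\le(\R,+)$ and $t>0$, exactly as in the Proposition this map preserves $<$ and leaves every value of $c_<$ unchanged, e.g. $c_<\bigl(ta+(1-t)s+\kappa_q,\,tb+(1-t)s+\kappa_q,\,tr+(1-t)s+\kappa_q\bigr)=c_<(a,b,r)$. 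Therefore each comparison entering $c\bigl((a,x)*(s,q),(b,y)*(s,q),(r,z)*(s,q)\bigr)$ coincides with the corresponding one in $c\bigl((a,x),(b,y),(r,z)\bigr)$, so $c$ is right-invariant and $A\times_\alpha Q$ is right circularly orderable.

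For part $(2)$ one argues symmetrically, starting from a left circular ordering on $Q$, assuming $t<1$ (so $1-t>0$) and $\kappa_{z,x}=\kappa_{z,y}$ for all $x,y,z$, and using that
\[
(s,q)*(a,x)=\bigl(ts+(1-t)a+\kappa_{q,x},\ q*x\bigr);
\]
now $\kappa_{q,x}$ is independent of its second index, so the same constant is added to all three $A$-coordinates, which are thereby obtained from $(1-t)a,(1-t)b,(1-t)r$ by a common affine map that, since $1-t>0$, preserves $<$ and every value of $c_<$. The remainder is the left-invariant analogue of the Proposition's Cases $1$--$4$.

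There is essentially no obstacle here; the only thing to be careful about is to record that the hypothesis on $\kappa$ ($\kappa$ independent of the first index on the right, of the second index on the left) forces the \emph{same} additive constant to appear in all three $A$-coordinates after multiplying by a common element, and that translation by that constant, composed with multiplication by the positive scalar $t$ (respectively $1-t$), leaves untouched every order relation and every value of $c_<$ used in the case analysis. Once this is noted the argument reduces to the preceding Proposition.
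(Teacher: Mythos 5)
Your proposal is correct and follows exactly the route the paper intends: the corollary is stated without a separate proof precisely because the hypothesis on $\kappa$ forces the same additive constant $(1-t)s+\kappa_q$ (resp. $ts+\kappa_q$) in all three $A$-coordinates, so the Proposition's case analysis and invariance check go through unchanged. Your explicit identification of that common affine map is the whole content of the deduction, and it matches the paper's (implicit) argument.
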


\begin{lemma}
If the lexicographic ordering $\prec$ on $S\times_{\alpha}Q$ is right-invariant $($respectively left-invariant$)$ then, for all $x, y\in Q$ and $s\in S$ , the map $\alpha_{x,y}(- , s): S\longrightarrow S$ $($respectively $\alpha_{x,y}(s, -): S\longrightarrow S)$ is an order preserving map on $S$.
\end{lemma}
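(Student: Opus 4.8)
The plan is to unwind the definition of the lexicographic ordering $\prec$ and feed a carefully chosen pair of triples into the invariance hypothesis. First I would fix $x,y\in Q$ and $s\in S$, and take arbitrary elements $a,b\in S$ with $a<'b$; the goal is then to show $\alpha_{x,y}(a,s)<'\alpha_{x,y}(b,s)$ (and symmetrically for $\alpha_{x,y}(s,-)$ in the left-invariant case).

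For the right-invariant case, since $a<'b$ the pairs $(a,x)$ and $(b,x)$ already differ in their first coordinate, so $(a,x)\prec(b,x)$ holds directly from the definition of $\prec$. Applying right-invariance of $\prec$ with the element $(s,y)$ gives $(a,x)*(s,y)\prec(b,x)*(s,y)$. Computing the quandle operation on $S\times_{\alpha}Q$, the left-hand side equals $(\alpha_{x,y}(a,s),x*y)$ and the right-hand side equals $(\alpha_{x,y}(b,s),x*y)$. These two pairs have the same second coordinate $x*y$, so the "tie-break" clause of the lexicographic order (which would compare the $Q$-coordinates) cannot apply; hence $(\alpha_{x,y}(a,s),x*y)\prec(\alpha_{x,y}(b,s),x*y)$ forces $\alpha_{x,y}(a,s)<'\alpha_{x,y}(b,s)$. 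Since $a<'b$ was arbitrary, $\alpha_{x,y}(-,s)$ is order preserving on $S$.

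For the left-invariant case I would argue symmetrically: from $a<'b$ deduce $(a,y)\prec(b,y)$, apply left-invariance of $\prec$ with the element $(s,x)$ to obtain $(s,x)*(a,y)\prec(s,x)*(b,y)$, i.e. $(\alpha_{x,y}(s,a),x*y)\prec(\alpha_{x,y}(s,b),x*y)$, and again conclude $\alpha_{x,y}(s,a)<'\alpha_{x,y}(s,b)$ because the second coordinates $x*y$ coincide. Thus $\alpha_{x,y}(s,-)$ is order preserving on $S$.

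This is essentially a direct computation, so there is no serious obstacle. The one point that deserves care is the observation that after multiplying by $(s,y)$ (resp. $(s,x)$) the two $Q$-coordinates become equal, since this is exactly what rules out the lexicographic tie-break and upgrades the conclusion from $\preceq$ to a strict inequality $<'$ in $S$.
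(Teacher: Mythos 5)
Your proof is correct and follows essentially the same route as the paper's: take two elements of $S\times_{\alpha}Q$ sharing the same $Q$-coordinate whose $S$-coordinates are comparable, act on the right (resp.\ left) by $(s,y)$ (resp.\ $(s,x)$), and observe that the resulting equal second coordinates force the strict inequality to land in the first coordinate. Your explicit remark about why the tie-break clause cannot apply is a slight elaboration of a step the paper leaves implicit, but the argument is the same.
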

\begin{proof}
Assume that the lexicographic ordering $\prec$  on $S\times_{\alpha}Q$ is right-invariant. Let $x, y\in Q$ and $s_0\in S$. Let $r, s\in S$ such that $r<'s$. Then $(r,x)*(s_0,y)=(\alpha_{x,y}(r,s_0), x*y)\prec (s,x)*(s_0,y)=(\alpha_{x,y}(s,s_0), x*y)$ which implies that $\alpha_{x,y}(r,s_0)<'\alpha_{x,y}(s,s_0)$. Therefore, for all $x,y\in Q$ and $s\in S$, the map $\alpha_{x,y}(- , s): S\longrightarrow S$  is an order preserving bijection on $S$. Similarly, if the lexicographic ordering $\prec$  on $S\times_{\alpha}Q$ is left-invariant then the map $\alpha_{x,y}(s, -): S\longrightarrow S$ is an order preserving map on $S$.
\end{proof}

\begin{proposition} Let $(S,<')$ be a totally ordered set and $(Q, <)$ be a right orderable quandle. Let $S\times_{\alpha}Q$ be a quandle extension associated with some dynamical quandle cocycle $\alpha$. If the map $\alpha_{-,y}(- , s): S\times Q\longrightarrow S$ for all $y\in Q$ and $s\in S$ is an order preserving map with respect to $\prec$, then the lexicographic ordering $\prec$ on $S\times_{\alpha}Q$ is a right ordering.
\end{proposition}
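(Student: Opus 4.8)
The plan is to verify right-invariance of the lexicographic order $\prec$ on $S\times_{\alpha}Q$ by a direct computation, pushing the assumption $(a,x)\prec(b,y)$ through the extension operation $(r,u)*(s,v)=(\alpha_{u,v}(r,s),u*v)$. So I would fix $(c,z)\in S\times Q$, take an arbitrary pair with $(a,x)\prec(b,y)$, and aim to prove
\[
(a,x)*(c,z)=\bigl(\alpha_{x,z}(a,c),\,x*z\bigr)\ \prec\ \bigl(\alpha_{y,z}(b,c),\,y*z\bigr)=(b,y)*(c,z).
\]
By definition of the lexicographic order, the hypothesis $(a,x)\prec(b,y)$ splits into two cases: either $a<'b$, or $a=b$ together with $x<y$, and I would handle them in turn.

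The one essential step is to apply the order-preservation hypothesis to the joint map $\alpha_{-,z}(-,c)\colon S\times Q\to S$ --- the instance of the hypothesis in which the fixed parameters are $z\in Q$ and $c\in S$. Since $(a,x)\prec(b,y)$ and this map is order preserving with respect to $\prec$, it returns the comparison $\alpha_{x,z}(a,c)\leq'\alpha_{y,z}(b,c)$; whenever this is strict, the first coordinates of the two products already decide matters, independently of how $x*z$ and $y*z$ compare. In the case $a<'b$ I expect strictness to be automatic, since restricting the joint map to $S\times\{x\}$ recovers that $\alpha_{x,z}(-,c)\colon S\to S$ is order preserving, and it is a bijection by the defining conditions of a dynamical quandle cocycle, hence strictly increasing; so that case closes immediately.

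This leaves the case $a=b$, $x<y$, where the borderline situation $\alpha_{x,z}(a,c)=\alpha_{y,z}(a,c)$ must be examined. If the two values differ, order preservation gives $\alpha_{x,z}(a,c)<'\alpha_{y,z}(a,c)$ and we finish as before; if they coincide, the comparison of the two products reduces to comparing their $Q$-coordinates, and here I would invoke right-orderability of $Q$ to pass from $x<y$ to $x*z<y*z$. Either way $(a,x)*(c,z)\prec(b,y)*(c,z)$, establishing right-invariance of $\prec$. I expect the main obstacle to be exactly this bookkeeping with ties in the $S$-coordinate: the hypothesis must be used for the joint map $\alpha_{-,z}(-,c)$, not merely for the single-variable maps $\alpha_{x,z}(-,c)$ (which cannot compare elements lying over different points of $Q$), and right-orderability of $Q$ is what covers the residual comparison of $Q$-coordinates; the rest is routine unwinding of the definitions.
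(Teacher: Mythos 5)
Your reading of the hypothesis as \emph{weak} order preservation (``$\prec$ goes to $\leq'$'') is where the argument breaks. The paper takes ``order preserving'' in the strict sense --- $(a,x)\prec(b,y)$ implies $\alpha_{x,z}(a,c)<'\alpha_{y,z}(b,c)$ --- exactly as in the lemma immediately preceding this proposition, where right-invariance of $\prec$ is shown to force the strict inequality $\alpha_{x,y}(r,s_0)<'\alpha_{x,y}(s,s_0)$. With that reading the proof is a one-line observation: the first coordinates of the two products are strictly ordered, so the lexicographic comparison is decided there, and neither your tie analysis nor the right-orderability of $Q$ is ever invoked (the paper's proof indeed never uses it).

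Under your weak reading, the step you call ``automatic'' is a genuine gap. When $a<'b$ but $y<x$ in $Q$, strict monotonicity of the single-variable map $\alpha_{x,z}(-,c)$ only compares points lying over the same element of $Q$: it gives $\alpha_{x,z}(a,c)<'\alpha_{x,z}(b,c)$, and the remaining comparison of $\alpha_{x,z}(b,c)$ with $\alpha_{y,z}(b,c)$ runs the wrong way, since $(b,y)\prec(b,x)$ only yields $\alpha_{y,z}(b,c)\leq'\alpha_{x,z}(b,c)$. The gap cannot be repaired: take $S=Q=\mathbb{Z}$ with $Q$ the trivial quandle and $\alpha_{x,y}(a,b)=a+f(x,y)$, where $f(x,y)=1$ if $x>y$ and $f(x,y)=0$ otherwise. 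One checks this is a dynamical cocycle and that each joint map $(a,x)\mapsto a+f(x,z)$ is weakly order preserving for the lexicographic order; yet $(0,1)\prec(1,0)$ while $(0,1)*(c,0)=(1,1)\succ(1,0)=(1,0)*(c,0)$, so $\prec$ is not right-invariant. Hence with the weak hypothesis the proposition is false, and your case $a<'b$ can only be closed by assuming strictness of the joint map outright --- at which point the entire case analysis collapses to the paper's one-liner. Your handling of the residual tie $a=b$, $x<y$ via right-orderability of $Q$ is internally correct but, on the intended reading, unnecessary.
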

\begin{proof}

Assume that the map $\alpha_{-,y}(- , s): S\times Q\longrightarrow S$ for all $y\in Q$ and $s\in S$ is an order preserving map with respect to $\prec$. Let $(r, x), (s, y)$ and $(t,z)\in S\times Q$ such that $(r,x)\prec (s, y)$. Then $\alpha_{x,z}(r,t)<' \alpha_{y,z}(s,t)$ which implies $(r,x)*(t,z)=(\alpha_{x,z}(r,t), x*z)\prec(\alpha_{y,z}(s,t), y*z)=(s,y)*(t,z)$.
\end{proof}

\section{Group extensions versus quandle Extensions and orderability}\label{Sec6}
Let $0\longrightarrow A \longrightarrow G\longrightarrow K\longrightarrow 1$ be a short exact sequence of groups, where $A$ is a $K$-module. If $A$ and $K$ are left-orderable then $G$ is left-orderable (see \cite{BaC1} and \cite{Cal}). If $A$ is left orderable and $K$ is left circularly orderable then $G$ is left circularly orderable (see \cite{BaC1} and \cite{Cal}). Now consider $G$ and $K$ as quandles, $\mathrm{Conj}(G)$ and $\mathrm{Conj}(K)$, respectively. Then $G$ is an extension of $K$ by the dynamical cocycle $\alpha:K\times K \longrightarrow A^{A\times A}$ define by $\alpha_{x, y}(a,b)=\eta_{x,y}(a)+\tau_{x,y}(b) + \kappa_{x, y}$ Where $\eta_{x,y}(a)=ya$,
$\tau_{x,y}(b)=b-(x*y)b$ and $\kappa_{x, y}=\theta(y, x)-yx\theta(y^{-1}, y)+\theta(yx, y^{-1})$ for any $x, y\in K$ and $a, b\in A$. Here $\theta$ is a group $2$-cocycle such that $G$ is the twisted semi-direct product $A\rtimes_{\theta}K$ (see \cite{CEGS}). We have the following two results.

\begin{proposition}\label{proplo1}
If the groups $A$ is right orderable, $K$ is bi-orderable and the ordering of $A$ is invariant under conjugation by elements of $G$, then the conjugation quandle of $G$ is right orderable.
\end{proposition}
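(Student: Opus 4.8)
The plan is to construct a strict total order $\prec$ on the underlying set of $G$ that is invariant under every inner automorphism of $G$. Since in $\mathrm{Conj}(G)$ one has $g*r=rgr^{-1}$, the quandle right‑orderability requirement ``$g\prec h\Rightarrow g*r\prec h*r$'' is exactly the statement that $\prec$ is conjugation‑invariant, so such a $\prec$ makes $\mathrm{Conj}(G)$ right orderable. First I would fix a bi‑invariant ordering $<_K$ on $K$ (available because $K$ is bi‑orderable; a bi‑invariant order is in particular conjugation‑invariant) and keep the given right ordering $<_A$ on $A$. Because $A$ is a $K$‑module it is abelian, so $<_A$ is automatically two‑sided; let $P_A=\{a\in A:1<_A a\}$ be its positive cone, so $P_A\cdot P_A\subseteq P_A$ and $A=P_A\sqcup\{1\}\sqcup P_A^{-1}$. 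Finally, since $<_A$ is invariant under conjugation by every element of $G$ and $A\triangleleft G$, each inner automorphism of $G$ restricts to an automorphism of $A$ carrying $P_A$ into $P_A$. Write $\bar g\in K$ for the image of $g$ under $G\to K$.

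Next I would define $g\prec h$ to mean: either $\bar g<_K\bar h$, or $\bar g=\bar h$ and $hg^{-1}\in P_A$ (when $\bar g=\bar h$ the element $hg^{-1}$ lies in $\ker(G\to K)=A$, so this is meaningful). Verifying that $\prec$ is a strict total order is routine: it is clearly irreflexive; if $g\neq h$ with $\bar g=\bar h$ then $hg^{-1}\in A\setminus\{1\}$ lies in exactly one of $P_A$, $P_A^{-1}$, yielding $g\prec h$ or $h\prec g$; and transitivity reduces to the within‑a‑coset case $g\prec h\prec k$ with $\bar g=\bar h=\bar k$, where $kg^{-1}=(kh^{-1})(hg^{-1})\in P_A\cdot P_A\subseteq P_A$.

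Then I would check conjugation‑invariance. Let $g\prec h$ and $k\in G$. If $\bar g<_K\bar h$, then $\overline{kgk^{-1}}=\bar k\,\bar g\,\bar k^{-1}<_K\bar k\,\bar h\,\bar k^{-1}=\overline{khk^{-1}}$ by conjugation‑invariance of $<_K$, so $kgk^{-1}\prec khk^{-1}$. If instead $\bar g=\bar h$ and $hg^{-1}\in P_A$, then $kgk^{-1}$ and $khk^{-1}$ have the same image in $K$, while $(khk^{-1})(kgk^{-1})^{-1}=k(hg^{-1})k^{-1}\in P_A$ since conjugation by $k$ preserves $P_A$; hence $kgk^{-1}\prec khk^{-1}$ again. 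Thus $\prec$ is invariant under all inner automorphisms, i.e. a right ordering of $\mathrm{Conj}(G)$.

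Under the identification $G\cong A\rtimes_{\theta}K$ recalled just before the statement, $\prec$ is simply the lexicographic order with the $K$‑coordinate dominant and the $A$‑coordinate secondary, and the verification uses nothing about the $2$‑cocycle $\theta$ beyond normality of $A$ in $G$. The two hypotheses that actually do work — and hence the points to handle carefully — are: conjugation‑invariance of $<_K$, which is precisely where bi‑orderability of $K$ (rather than mere left‑orderability) is needed; and stability of $P_A$ under conjugation by every element of $G$, which is exactly the extra assumption on the ordering of $A$. I do not anticipate a serious obstacle; the one thing to be careful with is the left/right bookkeeping when translating ``$\prec$ is preserved by the right translation $R_r$ of $\mathrm{Conj}(G)$'' into ``$\prec$ is invariant under the inner automorphism $x\mapsto rxr^{-1}$''.
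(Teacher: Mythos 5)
Your proposal is correct. The paper's own proof is much shorter but is purely a citation argument: it invokes Perron--Rolfsen to conclude that under these hypotheses the group $G$ itself is bi-orderable, and then invokes the known fact (from \cite{DDHPV, BPS}) that the conjugation quandle of a bi-orderable group is right orderable. What you have done is unpack both citations into one explicit construction: your lexicographic order $\prec$ (with the $K$-coordinate dominant) is exactly the order underlying the Perron--Rolfsen result, and your observation that quandle right-orderability of $\mathrm{Conj}(G)$ is literally conjugation-invariance of a total order on $G$ replaces the second citation. Your route is self-contained and in fact slightly sharper: you never need $\prec$ to be a bi-invariant group order, only conjugation-invariant, even though the same verification (using bi-invariance of $<_K$ and $G$-invariance of the positive cone $P_A$) would also establish left- and right-invariance and hence recover the bi-orderability of $G$ that the paper quotes. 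The paper's approach buys brevity and situates the statement within known literature; yours makes visible exactly where each hypothesis is used, in particular why bi-orderability (not mere left-orderability) of $K$ and conjugation-invariance of the order on $A$ are needed.
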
 
\begin{proof} 
Assume that the groups $A$ is right orderable, $K$ is bi-orderable and the ordering of $A$ is invariant under conjugation by elements of $G$. Then, the group $G$ is bi-orderable by \cite{PR}. Therefore $G$ is right orderable as the conjugation quandle by \cite{DDHPV, BPS}.
\end{proof}

Let $\prec$ be the lexicographic order on $A\times K$. We have the following:
\begin{proposition}\label{proplo2}
 Assume that the action of $K$ on $A$ is order preserving and $\kappa_{x, z}=\kappa_{y, z}$ for any $x, y$ and $z\in K$. Then $G$ is a right orderable quandle,
 \begin{enumerate}
     \item if $(A, <')$ and $(K, <)$ are right orderable quandles; and
     \item $z(a-b)+(y*z)c-(x*z)c<' 0$ whenever $(a, x)\prec (b, y)$ for any $a, b, c \in A$ and $x, y, z \in K$.
 \end{enumerate}
\end{proposition}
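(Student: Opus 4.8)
The plan is to apply the sufficient condition proved at the end of Section~\ref{Sec5}: if the map $\alpha_{-,z}(-,c)\colon S\times Q\to S$ is order preserving with respect to $\prec$ for every $z\in Q$ and every $c\in S$, then the lexicographic order $\prec$ on $S\times_{\alpha}Q$ is a right ordering. I would use it with $S=A$ equipped with the total order $<'$ and $Q=\mathrm{Conj}(K)$ equipped with the right ordering $<$ (both supplied by hypothesis~(1)); thus it suffices to prove that for each fixed $z\in K$ and $c\in A$ the map $(a,x)\mapsto\alpha_{x,z}(a,c)$ from $(A\times K,\prec)$ to $(A,<')$ is order preserving.

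To verify this I would first make the dynamical cocycle explicit. From the formulas recalled just before Proposition~\ref{proplo1}, $\alpha_{x,z}(a,c)=\eta_{x,z}(a)+\tau_{x,z}(c)+\kappa_{x,z}=za+\bigl(c-(x*z)c\bigr)+\kappa_{x,z}$, where $za$ and $(x*z)c$ denote the $K$-module action on $A$ and $x*z=zxz^{-1}$ in $K$. The hypothesis $\kappa_{x,z}=\kappa_{y,z}$ for all $x,y,z$ says that $\kappa_{x,z}$ does not depend on its first index; writing it $\kappa_z$, one gets, for any $(a,x),(b,y)\in A\times K$,
\[
\alpha_{x,z}(a,c)-\alpha_{y,z}(b,c)=z(a-b)+(y*z)c-(x*z)c,
\]
the $c$- and $\kappa_z$-terms cancelling precisely because $\kappa$ is independent of the first index --- this is the single place that hypothesis enters. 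Now by hypothesis~(2), whenever $(a,x)\prec(b,y)$ the right-hand side is $<'0$, so $\alpha_{x,z}(a,c)<'\alpha_{y,z}(b,c)$, which is exactly the required order preservation. The Proposition from Section~\ref{Sec5} then yields that $\prec$ is right-invariant on $A\times_{\alpha}K=G$, i.e.\ that $\mathrm{Conj}(G)$ is a right orderable quandle.

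I do not expect any serious obstacle: hypothesis~(2) is essentially written to be the order-preservation inequality after $\kappa$ has cancelled, so the argument is short bookkeeping with the $K$-module action (and the fact that $A$ is abelian makes the subtraction $a-b$ unambiguous). The two points to be careful about are the identification of the conjugation-quandle extension $G=A\times_{\alpha}K$ with the dynamical cocycle recalled from \cite{CEGS}, and the fact that hypothesis~(1) is precisely what licenses the appeal to the Proposition at the end of Section~\ref{Sec5}. Alternatively, one may bypass that Proposition and argue directly: for $(r,x)\prec(s,y)$ and any $(t,z)\in A\times K$, compute $(r,x)*(t,z)=(zr+t-(x*z)t+\kappa_z,\,x*z)$ and $(s,y)*(t,z)=(zs+t-(y*z)t+\kappa_z,\,y*z)$, and observe that hypothesis~(2) forces the first coordinates to be strictly $<'$-ordered, so the lexicographic comparison of the two images already follows from the first coordinate alone.
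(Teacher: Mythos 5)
Your proposal is correct and follows essentially the same route as the paper: write out $\alpha_{x,z}(a,c)=za+c-(x*z)c+\kappa_{x,z}$ explicitly, cancel the $\kappa$-terms using $\kappa_{x,z}=\kappa_{y,z}$, and invoke hypothesis~(2) to conclude $\alpha_{x,z}(a,c)<'\alpha_{y,z}(b,c)$, so the lexicographic comparison is decided by the first coordinate. The only difference is cosmetic: the paper splits into the two cases $a<'b$ and ($a=b$ with $x<y$), whereas you note that hypothesis~(2) already separates the first coordinates in both cases --- a slightly cleaner packaging of the identical computation.
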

\begin{proof}
Assume that $A$ and $K$ admit right orderings $<'$ and $<$ respectively. Recall that $\alpha_{x, y}(a,b)=\eta_{x,y}(a)+\tau_{x,y}(b) + \kappa_{x, y}$ where $\eta_{x,y}(a)=ya$,
	$\tau_{x,y}(b)=b-(x*y)b$.
Let $(a, x), (b, y)$ and $(c,z)\in A\times K$ such that $(a,x)\prec (b, y)$. Then $(a,x)*(c,z)=(\alpha_{x,z}(a,c), x*z)=(za+c-(x*z)c+\kappa_{x, z}, x*z)$ and $(zb+c-(y*z)c + \kappa_{y, z}, y*z)=(\alpha_{y,z}(b,c), y*z)=(b,y)*(c,z)$. We have two cases.

{\bf Case 1:} Assume $a<'b$. Then since $A$ is Abelian, the ordering $<'$ is both left and right invariant, so $za+c-(x*z)c+\kappa_{x, z}<'zb+c-(y*z)c+\kappa_{y, z}$ which implies $(a,x)*(c,z)=(\alpha_{x,z}(a,c), x v*z)= (za+c-(x*z)c+\kappa_{x, z}, x*z)\prec (zb+c-(y*z)c+\kappa_{y, z}, y*z)=(\alpha_{y,z}(b,c), y*z)=(b,y)*(c,z)$.

{\bf Case 2:} Assume $a=b$ and $x<y$. Then $x*z<y*z$ which implies $(a,x)*(c,z)=(\alpha_{x,z}(a,c), x*z)= (za+c-(x*z)c+\kappa_{x, z}, x*z)\prec (zb+c-(y*z)c+\kappa_{y, z}, y*z)=(\alpha_{y,z}(b,c), y*z)=(b,y)*(c,z)$.

This complete the proof to the fact that $A\times_{\alpha} K$ is right orderable.
\end{proof}

\begin{proposition}\label{proplo3}
Assume that the action of $K$ on $A$ is order preserving and $\kappa_{z, x}=\kappa_{z, y}$ for any $(x, z)$ and $(y, z)\in K\times K$. Then $G$ is a left orderable quandle,
 \begin{enumerate}
     \item if $(A, <')$ and $(K, <)$ are left orderable quandles; and
     \item $a-b+(z*y)b-(z*x)a<' 0$ whenever $(a, x)\prec (b, y)$ for any $a, b, c \in A$ and $x, y, z \in K$.
 \end{enumerate}
\end{proposition}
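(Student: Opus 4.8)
The plan is to follow the template of Proposition~\ref{proplo2}, interchanging right- with left-invariance, and to show directly that the lexicographic ordering $\prec$ on $A\times_{\alpha}K$ is left-invariant. First I would record the two relevant products: for $(c,z),(a,x)\in A\times K$,
\[
(c,z)*(a,x)=\bigl(\eta_{z,x}(c)+\tau_{z,x}(a)+\kappa_{z,x},\,z*x\bigr)=\bigl(xc+a-(z*x)a+\kappa_{z,x},\,z*x\bigr),
\]
and similarly $(c,z)*(b,y)=\bigl(yc+b-(z*y)b+\kappa_{z,y},\,z*y\bigr)$. The hypothesis $\kappa_{z,x}=\kappa_{z,y}$ makes the two additive $\kappa$-terms in these first coordinates coincide, so they drop out of any comparison; what remains to be controlled is exactly the quantity $\eta_{z,x}(c)+\tau_{z,x}(a)-\eta_{z,y}(c)-\tau_{z,y}(b)=xc-yc+a-(z*x)a-b+(z*y)b$, which is the content of hypothesis~(2).

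Next, assuming $(a,x)\prec(b,y)$ and fixing $(c,z)\in A\times K$, I would split along the definition of the lexicographic order. In Case~1, $a<'b$ with $x,y$ arbitrary: since $A$ is abelian its order $<'$ is bi-invariant, so hypothesis~(2) may be rearranged to read $xc+a-(z*x)a<'yc+b-(z*y)b$; adding the equal $\kappa$-terms, the first coordinate of $(c,z)*(a,x)$ is strictly $<'$ that of $(c,z)*(b,y)$, hence $(c,z)*(a,x)\prec(c,z)*(b,y)$. In Case~2, $a=b$ and $x<y$: here hypothesis~(2) specialized to $a=b$ governs the $A$-coordinates, and $x<y$ together with left-orderability of $K$ as a quandle gives $z*x<z*y$, which supplies the required $\prec$-comparison through the lexicographic second coordinate. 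Combining the two cases shows that $\prec$ is left-invariant, i.e.\ $\mathrm{Conj}(G)=A\times_{\alpha}K$ is a left-orderable quandle.

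The step I expect to require the most care is the bookkeeping around the automorphism part $\eta_{z,x}(c)=xc$: because $x\neq y$ in general the terms $xc$ and $yc$ are genuinely distinct and do not cancel, so hypothesis~(2) really is needed (the $\eta$- and $\tau$-discrepancies taken together), and it is the cancellation of the $\kappa$-terms via $\kappa_{z,x}=\kappa_{z,y}$ that makes hypothesis~(2) sufficient rather than merely necessary. One should also observe at the outset that left-orderability of $K$ guarantees the order $<$ on $K$ is total, so that $\prec$ is genuinely a total order on $A\times K$; apart from these points the argument is routine and mirrors Case~1 and Case~2 of the proof of Proposition~\ref{proplo2} almost verbatim.
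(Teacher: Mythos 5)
Your proposal is correct and takes essentially the same route as the paper: the paper's entire proof of Proposition~\ref{proplo3} is the single sentence ``a similar proof as the one of the previous proposition,'' and your write-up is exactly that mirroring of the two-case argument of Proposition~\ref{proplo2}, with $(c,z)*(a,x)=(xc+a-(z*x)a+\kappa_{z,x},\,z*x)$ in place of $(a,x)*(c,z)$ and the hypothesis $\kappa_{z,x}=\kappa_{z,y}$ cancelling the cocycle terms. One caveat worth recording: hypothesis~(2) as printed reads $a-b+(z*y)b-(z*x)a<'0$ and omits the $\eta$-discrepancy $xc-yc$ even though $c$ is quantified in the statement; your argument (correctly, and in exact parallel with the condition $z(a-b)+(y*z)c-(x*z)c<'0$ of Proposition~\ref{proplo2}) uses the full first-coordinate difference $xc-yc+a-(z*x)a-b+(z*y)b<'0$, so you are proving the proposition under the evidently intended reading of~(2) rather than its literal wording, and it would be worth saying so explicitly rather than asserting that the displayed quantity ``is the content of hypothesis~(2).''
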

\begin{proof}
 A similar proof as the one of the previous proposition show this proposition.
\end{proof}

\section{Cyclic branched covers and $n$-quandles}\label{Sec7}
In this section, all links are tame links in $\mathbb{S}^3$.
Let $L\subset \mathbb{S}^3$ be an $m$-components oriented link. Let $X$ be the exterior of $L$ in $\mathbb{S}^3$. Then, there is an epimorphism $\pi_1(X)\longrightarrow \mathbb{Z}$ which sends the meridians of $L$ oriented positively with respect to the orientation of $L$ to the generator of $\mathbb{Z}$. The covering $X_{\infty}\longrightarrow X$ associated to this epimorphism is  called the infinite cyclic covering. By reduction (mod $n$), we get a map $\pi_1(X)\longrightarrow \mathbb{Z}_n$ which determines the $n$-fold cyclic cover $q:X_n\longrightarrow X$ of $L$. The $n$-fold cyclic branched cover $\Sigma_n(L)$ is obtained by gluing $m$-solid tori to $X_n$ by identifying preimage of meridians of $L$ by $q$ to the meridians of the solid tori.

Given a quandle $Q$, the {\it enveloping group} $\mathrm{Env}(Q)$ of $Q$ is the group defined by,

$$\mathrm{Env}(Q)=\langle \widetilde{q}\; {\rm for \; all\;}q\in Q\; |\; \widetilde{p*q}=\widetilde{q}^{-1}\widetilde{p} \;\widetilde{q}\; {\rm for\; all\;} p\in Q\rangle ,$$ and for any $n\geq 2$, the {\it $n$-enveloping group} $\mathrm{Env}_n(Q)$ of $Q$ is defined by, $$\mathrm{Env}_n(Q)=\langle \widetilde{q}\; {\rm for \; all\;}q\in Q\; |\; \widetilde{p*q}=\widetilde{q}^{-1}\widetilde{p} \; \widetilde{q}\;{\rm and}\; \widetilde{q}^n=1\; {\rm for\; all\;} p\in Q\rangle .$$ 

This notion of enveloping group of a quandle $Q$ was defined in \cite{Joyce} as $\mathrm{Adconj}(Q)$, and $\mathrm{Conj}(Q)$ in \cite{Winker}.
As examples, if $Q$ is the trivial quandle then the enveloping group of $Q$ is the free abelian group with rank equal to the cardinality of $Q$. We have also the fact that the enveloping group of a link quandle is the link group (see \cite{Joyce}).

Let $\mathrm{Env}(Q)$ and $\mathrm{Env}_n(Q)$ be the enveloping group and $n$-enveloping group of a quandle $Q$ respectively. The {\it exponent} of an element $\widetilde{q_1}^{e_1}\widetilde{q_2}^{e_2}\cdots \widetilde{q_k}^{e_k}$ of $\mathrm{Env}(Q)$ written as a product of generators is the summation $e_1+e_2+\cdots e_k$. The {\it exponent} of an element $\widetilde{q_1}^{e_1}\widetilde{q_2}^{e_2}\cdots \widetilde{q_k}^{e_k}$ of $\mathrm{Env}_n(Q)$ written as a product of generators is the summation $e_1+e_2+\cdots e_k$ modulo $n$. The {\it exponent zero subgroup} $E^0(Q)$ (respectively $E^0_n(Q)$) of $\mathrm{Env}(Q)$ (respectively of $\mathrm{Env}_n(Q)$) is the set of all elements with exponent equal to zero. 

Let $L$ be a link in $\mathbb{S}^3$. Then the fundamental group of the $n$-fold cyclic branched cover $\pi_1(\Sigma_n(L))$ is equal to
the {\it exponent zero subgroup} $E^0_n(Q(L))\cong E^0_n(Q_n(L))$ of $\mathrm{Env}_n(Q(L))$, where $Q(L)$ is the fundamental quandle of $L$ (see \cite{Winker}) and $Q_n(L)$ is the $n$-quandle of $Q(L)$. Notice also that the index of $E^0_n(Q(L))\cong E^0_n(Q_n(L))$ in $\mathrm{Env}_n(Q(L))$ is equal to $n$.

Let $L$ be a link in $\mathbb{S}^3$. By Theorem \ref{n-quandle}, for any positive integer $n$, the $n$-quandle $Q_n(L)$ is not right circularly orderable, hence it is also not right orderable. In particular, the involutory quandle $Q_2(L)$ is not right circularly orderable and hence not right orderable. On the other hand, there are infinitely many links for which their $2$-enveloping group is right circularly orderable.

\begin{theorem}
    There are infinitely many links $L$ for which $\mathrm{Env}_2(Q(L))$ is right circularly orderable.
\end{theorem}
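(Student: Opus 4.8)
The plan is to combine the exact sequence tying $\mathrm{Env}_2(Q(L))$ to the double branched cover with the group‑extension results for orderability recalled at the beginning of Section~\ref{Sec6}. For any link $L$, the exponent homomorphism identifies $\pi_1(\Sigma_2(L))\cong E^0_2(Q(L))$ with a normal subgroup of index $2$ in $\mathrm{Env}_2(Q(L))$, so there is a short exact sequence
\[
1\longrightarrow \pi_1(\Sigma_2(L))\longrightarrow \mathrm{Env}_2(Q(L))\longrightarrow \mathbb{Z}_2\longrightarrow 1 .
\]
Since $\mathbb{Z}_2$ is circularly orderable and, for a group, left circular orderability is equivalent to right circular orderability (conjugate a left‑invariant circular ordering by $g\mapsto g^{-1}$), it is enough to exhibit infinitely many links $L$ for which $\pi_1(\Sigma_2(L))$ is left‑orderable; an extension of a left‑orderable group by $\mathbb{Z}_2$ is then left circularly orderable, hence right circularly orderable. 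In the language of the $L$‑space conjecture, one is asking that $\Sigma_2(L)$ not be an $L$‑space, together with an honest left‑ordering on its fundamental group.

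For the family, the simplest choice is the sequence of trivial links. Let $U_m$ be the $m$‑component unlink. From the Wirtinger presentation — equivalently, because $Q(U_m)$ is the free quandle on $m$ generators — the group $\mathrm{Env}_2(Q(U_m))$ is the free product $\mathbb{Z}_2^{*m}$ of $m$ copies of $\mathbb{Z}_2$, and its exponent‑zero subgroup is $\pi_1(\Sigma_2(U_m))=\pi_1(\#_{m-1}(\mathbb{S}^1\times\mathbb{S}^2))=F_{m-1}$, a free group, which is left‑orderable. Thus each $\mathrm{Env}_2(Q(U_m))$ is right circularly orderable, and as the $U_m$ are pairwise inequivalent this already produces infinitely many links. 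One can also see this without the exact sequence: $\mathbb{Z}_2^{*m}$ is finite cyclic for $m\le 1$ and for $m\ge 2$ acts faithfully on $\mathbb{S}^1$ by orientation‑preserving homeomorphisms (for $m\ge 3$ as a Fuchsian group, being the orbifold fundamental group of a hyperbolic disk with $m$ cone points of order two), so it is circularly orderable. If non‑split prime examples are wanted instead, one may fix a single knot $K_0$ with $\Sigma_2(K_0)$ irreducible and $\pi_1(\Sigma_2(K_0))$ left‑orderable (such knots are known, e.g. Montesinos knots whose double branched cover is a non‑$L$‑space Seifert fibered space) and run the argument on $K_0,\ K_0\#K_0,\ K_0\#K_0\#K_0,\dots$, using $\Sigma_2(\#^{k}K_0)=\#^{k}\Sigma_2(K_0)$ and the fact that a free product of left‑orderable groups is left‑orderable.

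The one point that needs care is the extension step itself: that an extension $1\to A\to G\to \mathbb{Z}_2\to 1$ with $A$ left‑orderable is circularly orderable, including when $A$ is non‑abelian (as for $U_m$ with $m\ge 3$), whereas the statement recalled in Section~\ref{Sec6} is phrased for abelian kernels. I would prove this by taking the free order‑two rotation action of $\mathbb{Z}_2$ on $\mathbb{S}^1$, pulling it back along $G\to\mathbb{Z}_2$ to a $G$‑action on $\mathbb{S}^1$ with kernel exactly $A$, and then blowing up one $\mathbb{Z}_2$‑orbit into a pair of intervals on which $A$ acts faithfully by an order‑preserving action; freeness of the $\mathbb{Z}_2$‑action is precisely what lets this go through with no compatibility hypothesis between the conjugation action of $G$ on $A$ and the ordering of $A$. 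For the unlink family this subtlety is moot, since $\mathbb{Z}_2^{*m}$ was already placed inside $\mathrm{Homeo}^{+}(\mathbb{S}^1)$ directly, so there the argument is essentially immediate; in the prime‑examples variant the substantive input is the cited left‑orderability of $\pi_1(\Sigma_2(K_0))$.
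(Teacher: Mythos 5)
Your proof is correct, and it shares the paper's structural skeleton --- the identification of $\pi_1(\Sigma_2(L))\cong E^0_2(Q(L))$ as an index-two subgroup, the short exact sequence $1\to \pi_1(\Sigma_2(L))\to \mathrm{Env}_2(Q(L))\to \mathbb{Z}_2\to 1$, and the lemma that an extension of a left-orderable group by a circularly orderable one is circularly orderable --- but it feeds a different family into that skeleton. The paper takes $L=T(p,q)$ with $p>3$, $q>5$ and invokes Gordon--Lidman for the orderability of $\pi_1(\Sigma_2(T(p,q)))$; you take the unlinks $U_m$, for which $\mathrm{Env}_2(Q(U_m))\cong \mathbb{Z}_2^{*m}$ and $\pi_1(\Sigma_2(U_m))\cong F_{m-1}$ is free, so the orderability input is elementary, and you can even bypass the extension lemma altogether by embedding $\mathbb{Z}_2^{*m}$ into $\mathrm{Homeo}^{+}(\mathbb{S}^1)$ as a Fuchsian group. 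Your route buys self-containedness (no Heegaard Floer or branched-cover input) at the cost of producing only split, hence somewhat degenerate, examples, whereas the paper's torus knots are the more substantive illustration; on the other hand, you correctly flag that the extension step must hold for non-abelian kernels --- a point the paper uses silently, since $\pi_1(\Sigma_2(T(p,q)))$ is non-abelian, and which is covered by the general statements in Calegari and Ba--Clay, so your blow-up argument, while valid, is not strictly needed. One small correction: the knots $\#^{k}K_0$ in your final variant are non-split but not prime for $k\geq 2$, so they should not be advertised as prime examples; they do, however, still furnish infinitely many nontrivial links, so the theorem as stated is proved either way.
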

\begin{proof}
Let $L=T(p, q)\subset\mathbb{S}^3$ be a $(p,q)$-torus knot with $p>3$ and $q>5$. Then $\pi_1(\Sigma_2(L))$ is right 
orderable by Theorem 1.2 of \cite{GL}. Since $\pi_1(\Sigma_2(L))$ is an index 2 subgroup of $\mathrm{Env}_2(Q(L))$, there exists a short exact sequence
$1\longrightarrow \pi_1(\Sigma_2(L)) \longrightarrow \mathrm{Env}_2(Q(L)) \longrightarrow \mathbb{Z}_2\longrightarrow 1$. Since $\pi_1(\Sigma_2(L))$ is right orderable and $\mathbb{Z}_2$ is right circular orderable, $\mathrm{Env}_2(Q(L))$ is right circular orderable by a lexicographic circular ordering.
\end{proof}

It was shown in \cite{Hu}, Theorem 4.3 on page 401,  that for any $(p,q)$ two-bridge knot, with $p \equiv 3$ modulo $4$,  there
are only finitely many cyclic branched covers whose fundamental groups are not
right-orderable.  Thus we have the following proposition giving  infinitely many links for which the $n$-enveloping group of their fundamental quandle $Q(L)$ is right circularly orderable for $n$ sufficiently large prime number.

\begin{proposition}\label{H}
	
	Let $L$ be a $(p,q)$ two-bridge knot  , with $p \equiv 3$ modulo $4$.  For $n$ sufficiently large \emph{prime number},  the group $\mathrm{Env}_n(Q(L))$  is right circularly orderable.
\end{proposition}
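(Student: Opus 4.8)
The plan is to run the argument of the preceding theorem on $\mathrm{Env}_2(Q(L))$ with the $n$-fold branched cover in place of the $2$-fold one, invoking Hu's theorem in place of the torus-knot input. First I would record the short exact sequence already implicit in the discussion of exponent zero subgroups: the exponent homomorphism $\mathrm{Env}_n(Q(L)) \to \mathbb{Z}_n$, which sends every generator $\widetilde{q}$ to $1$, is well defined (each defining relator has exponent $0$ modulo $n$) and surjective, and its kernel is $E^0_n(Q(L)) \cong \pi_1(\Sigma_n(L))$, a normal subgroup of index $n$. Hence
\[
1 \longrightarrow \pi_1(\Sigma_n(L)) \longrightarrow \mathrm{Env}_n(Q(L)) \longrightarrow \mathbb{Z}_n \longrightarrow 1 .
\]

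Next, since $L$ is a $(p,q)$ two-bridge knot with $p \equiv 3 \pmod 4$, Theorem~4.3 of \cite{Hu} tells us that $\pi_1(\Sigma_n(L))$ fails to be right orderable for only finitely many $n$, so there is an $N$ with $\pi_1(\Sigma_n(L))$ right orderable for every prime $n \geq N$; fix such an $n$. The quotient $\mathbb{Z}_n$ is a finite cyclic group, hence right circularly orderable via the standard translation-invariant cyclic order on $\{0,1,\dots,n-1\}$. It then remains to combine a right ordering on the kernel with a right circular ordering on the quotient into a right circular ordering on $\mathrm{Env}_n(Q(L))$: this is exactly the lexicographic construction used in the proof of the $\mathrm{Env}_2(Q(L))$ theorem, i.e.\ the right-handed counterpart of the extension principle recalled at the start of Section~\ref{Sec6}. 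Concretely one chooses a set-theoretic section of $\mathrm{Env}_n(Q(L)) \to \mathbb{Z}_n$, orders each coset of $\pi_1(\Sigma_n(L))$ by transporting its right ordering, and feeds the resulting data into the formula defining $c_\prec$ relative to the cyclic order on $\mathbb{Z}_n$.

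The only point needing care is checking that this lexicographic map $\mathrm{Env}_n(Q(L))^3 \to \{-1,0,1\}$ genuinely satisfies the circular-order cocycle identity and is right invariant; but this is the same computation performed for $\mathbb{Z}_2$ in the proof of the previous theorem, and it is insensitive to the order of the finite quotient, so no new difficulty appears. Everything else is citation: Hu's finiteness theorem provides the right orderability of $\pi_1(\Sigma_n(L))$ for all sufficiently large primes $n$, and the identification $\pi_1(\Sigma_n(L)) \cong E^0_n(Q(L))$ provides the short exact sequence.
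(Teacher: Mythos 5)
Your argument is correct and follows essentially the same route as the paper: the short exact sequence $1\to \pi_1(\Sigma_n(L))\to \mathrm{Env}_n(Q(L))\to \mathbb{Z}_n\to 1$ coming from the exponent homomorphism, Hu's finiteness theorem to get right orderability of the kernel for all sufficiently large primes $n$, and a lexicographic circular ordering to conclude. You supply some details the paper leaves implicit (well-definedness of the exponent map and the right-handed version of the extension principle), but the structure of the proof is the same.
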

\begin{proof}
	Assume that $p \equiv 3$ modulo $4$.  There are infinitely many $(p,q)$ two-bridge knots $L$ for which $\pi_1(\Sigma_n(L))$ is right orderable group \cite{Hu}.  Since $n$ is prime, then there is a short exact sequence
	$1\longrightarrow \pi_1(\Sigma_n(L)) \longrightarrow \mathrm{Env}_n(Q(L)) \longrightarrow \mathbb{Z}_n\longrightarrow 1$. Since $\pi_1(\Sigma_n(L))$ is right orderable \cite{Hu} and $\mathbb{Z}_n$ is right circular orderable, $\mathrm{Env}_n(Q(L))$ is right circular orderable by a lexicographic circular ordering.
\end{proof}

\section{Non-bi-orderability of knot group}\label{Sec8}

Until now, it is not known exactly which links in the $3$-sphere $\mathbb{S}^3$ have bi-orderable link group. In this section, we investigate this question, ask a generalized question, and give an answer to Question 1.1 in \cite{CDN}. We have the following:
\begin{question}
    Give a Characterization of links in $\mathbb{S}^3$ that have non-bi-circularly orderable (resp. non-bi-orderable) link group.
\end{question}

Given a quandle $Q$, the {\it enveloping group} $\mathrm{Env}(Q)$ of $Q$ is the group defined by,

$$\mathrm{Env}(Q)=\langle \widetilde{q}\; {\rm for \; all\;}q\in Q\; |\; \widetilde{p*q}=\widetilde{q}^{-1}\widetilde{p} \; \widetilde{q}\; {\rm for\; all\;} p\in Q\rangle.$$ 

 If $Q$ is a trivial quandle, then $\mathrm{Env}(Q)$ is the free abelian group of rank equal to the cardinality of $Q$. We have the following result of Joyce:

\begin{theorem}\cite{Joyce}\label{isomp} Let $L$ be an oriented link in $\mathbb{S}^3$. The enveloping group $\mathrm{Env}(Q(L))$ is isomorphic to the link group $\pi_1(\mathbb{S}^3\setminus L)$ of $L$.    
\end{theorem}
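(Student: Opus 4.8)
The plan is to compare combinatorial presentations coming from a link diagram. Fix a diagram $D$ of the oriented link $L$ with set of arcs $\mathcal{A}=\{a_1,\dots,a_k\}$. On the group side, Wirtinger's theorem gives the presentation $\pi_1(\mathbb{S}^3\setminus L)=\langle x_a\ (a\in\mathcal{A})\mid r_\chi\ (\chi\text{ a crossing})\rangle$, where $x_a$ is the meridian loop of the arc $a$ and, at a crossing $\chi$ with over-arc $b$ and under-arcs $a$ (incoming) and $c$ (outgoing), the relator $r_\chi$ reads $x_c=x_b^{-1}x_ax_b$ or $x_c=x_bx_ax_b^{-1}$ according to the sign of $\chi$. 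On the quandle side, Joyce and Matveev showed that the fundamental quandle admits the parallel presentation $Q(L)=\langle a\ (a\in\mathcal{A})\mid c=a* b\ \text{or}\ c=a*^{-1}b\ \text{at each crossing}\rangle$ in the category of quandles, with the very same incidence data and the same choice of sign at each crossing.

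Next I would invoke the universal property of the enveloping group. By its defining presentation, $\mathrm{Env}$ is the left adjoint of the conjugation functor $G\mapsto\mathrm{Conj}(G)$: a quandle homomorphism $Q\to\mathrm{Conj}(G)$ is the same data as a group homomorphism $\mathrm{Env}(Q)\to G$. A left adjoint preserves colimits, and a presentation of a quandle is a colimit (coequalizer) of free quandles; hence $\mathrm{Env}$ turns the presentation of $Q(L)$ into the presentation of a group with one generator $\widetilde{a}$ per arc and, for each crossing, the relation obtained from $c=a* b$ by replacing $a* b$ with $\widetilde{b}^{-1}\widetilde{a}\,\widetilde{b}$ (and from $c=a*^{-1}b$ by replacing with $\widetilde{b}\,\widetilde{a}\,\widetilde{b}^{-1}$), which is exactly what the defining relation $\widetilde{p* q}=\widetilde{q}^{-1}\widetilde{p}\,\widetilde{q}$ dictates. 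For a reader who prefers to avoid categorical language, one can instead verify directly that $\widetilde{a}\mapsto x_a$ respects every relation and that $x_a\mapsto\widetilde{a}$ respects the Wirtinger relations, so that these assignments define mutually inverse homomorphisms.

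Putting the two steps together, $\mathrm{Env}(Q(L))$ is presented by the generators $\widetilde{a}$, $a\in\mathcal{A}$, together with the relations $\widetilde{c}=\widetilde{b}^{-1}\widetilde{a}\,\widetilde{b}$ (resp.\ $\widetilde{c}=\widetilde{b}\,\widetilde{a}\,\widetilde{b}^{-1}$) at each crossing, and this is precisely the Wirtinger presentation of $\pi_1(\mathbb{S}^3\setminus L)$ read off from $D$. Therefore $\widetilde{a}\mapsto x_a$ extends to an isomorphism $\mathrm{Env}(Q(L))\cong\pi_1(\mathbb{S}^3\setminus L)$; since both $Q(L)$ and the link group are intrinsic invariants of $L$, the isomorphism does not depend on the chosen diagram up to the canonical identifications.

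The main obstacle is the bookkeeping with orientations and crossing signs: one must check that the sign appearing in the Wirtinger relator matches the one in the quandle crossing relation, so that the operation $*$ (and not $*^{-1}$) pairs correctly with conjugation, and one must appeal to the Joyce--Matveev theorem that the diagrammatic presentation genuinely computes the topologically defined fundamental quandle. These are the only non-formal ingredients; the passage from the quandle presentation to the group presentation via $\mathrm{Env}$ is purely algebraic.
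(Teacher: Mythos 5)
The paper offers no proof of this statement: it is quoted as a known theorem of Joyce (and the same fact is restated with a citation in Section~\ref{Sec7}), so there is nothing internal to compare against. Your argument is the standard one --- and essentially Joyce's own: match the diagrammatic presentation of $Q(L)$ with the Wirtinger presentation of $\pi_1(\mathbb{S}^3\setminus L)$, then use that $\mathrm{Env}$, being left adjoint to $\mathrm{Conj}$, carries quandle presentations to group presentations by replacing $a*b$ with a conjugate. The sketch is sound; the only points you defer (the sign conventions at crossings, and whether $\mathrm{Conj}$ is taken as $g*h=hgh^{-1}$ or $h^{-1}gh$ so that the adjunction unit $q\mapsto\widetilde q$ is actually a quandle map --- the paper itself is not entirely consistent here) are routine and do not affect the isomorphism type, since the two conventions are exchanged by $g\mapsto g^{-1}$.
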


Let $Q$ be a quandle. There is a natural map $\rho: Q \longrightarrow \mathrm{Env}(Q)$ defined by $\rho(q)=\widetilde{q}$. Moreover, the induced map $\widetilde{\rho}:Q\longrightarrow \mathrm{Conj}(\mathrm{Env}(Q))$ defined by $\rho(q)=\widetilde{q}$ is a quandle homomorphism (see \cite{Winker}). We have the following result of Ryder:

\begin{theorem}\cite{Ryder}\label{prime}
    Let $L$ be an oriented link in $\mathbb{S}^3$. The quandle homomorphism 
\begin{align*}
\rho &: Q(L) \rightarrow  \mathrm{Conj}(\mathrm{Env}(Q(L)))\\
      &\hspace{1cm}  q \mapsto \widetilde{q}
\end{align*}
is injective if and only if $L$ is prime.
\end{theorem}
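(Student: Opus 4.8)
The plan is to pass from the abstract quandle to Joyce's concrete \emph{noose} (lasso) model, in which both $Q(L)$ and the homomorphism $\rho$ become transparent. Write $X=\mathbb{S}^3\setminus N(L)$, let $T_1,\dots,T_m$ be the boundary tori, $P_i\le\pi_1(X)$ the peripheral subgroup carried by $T_i$, and $m_i\in P_i$ the meridian of the $i$-th component. Joyce's description identifies $Q(L)$, as a set, with the disjoint union $\bigsqcup_{i=1}^{m}\pi_1(X)/P_i$ of the coset spaces (a noose records a path from the basepoint to $T_i$ together with a meridian there, and the homotopies that drag the endpoint around $T_i$ translate into multiplication by $P_i$); using $\mathrm{Env}(Q(L))\cong\pi_1(X)$ from Theorem~\ref{isomp}, the map $\rho$ is then simply $gP_i\mapsto g\,m_i\,g^{-1}$, which is well defined since $P_i$ is abelian and contains $m_i$. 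So the first step is to set this dictionary up carefully.

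The second step is elementary group theory. On the $i$-th piece, $\rho$ is injective exactly when $g\,m_i\,g^{-1}=h\,m_i\,h^{-1}$ forces $h^{-1}g\in P_i$, i.e.\ exactly when $C_{\pi_1(X)}(m_i)=P_i$ (one always has $P_i\subseteq C_{\pi_1(X)}(m_i)$). Moreover the images of distinct pieces are automatically disjoint: if $g\,m_i\,g^{-1}=h\,m_j\,h^{-1}$ then $m_i$ and $m_j$ are conjugate, hence equal in $H_1(X;\mathbb{Z})\cong\mathbb{Z}^m$, forcing $i=j$. Therefore $\rho$ is injective if and only if $C_{\pi_1(X)}(m_i)=P_i$ for every $i$, and it remains to show that this centralizer condition characterizes primeness.

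For the direction ``$L$ not prime $\Rightarrow$ $\rho$ not injective'' one argues directly. If $L$ is a nontrivial connected sum along its $i$-th component, the connecting $2$-sphere meets $X$ in an essential annulus with both boundary curves meridians on $T_i$, and van Kampen splits $\pi_1(X)$ as an amalgamated product $\pi_1(X')\ast_{\langle m_i\rangle}\pi_1(X'')$ over the meridian. The longitude $\ell''$ of the $i$-th component computed inside the nontrivial summand $X''$ commutes with $m_i$, but a homology/normal-form comparison in the amalgam shows $\ell''\notin P_i$ (otherwise $\ell''$ would be a power of $m_i$, contradicting nontriviality of that summand). Hence $C_{\pi_1(X)}(m_i)\supsetneq P_i$ and $\rho$ fails to be injective; split links are not connected sums, so this case does not disturb them, consistent with $\rho$ remaining injective on split unions of primes.

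The converse, ``$L$ prime $\Rightarrow C_{\pi_1(X)}(m_i)=P_i$'', is the crux and the one genuine input from $3$-manifold topology. Given $g$ commuting with $m_i$, the subgroup $\langle m_i,g\rangle$ is abelian. If it is infinite cyclic, then $g\in\langle m_i\rangle\subseteq P_i$ because $m_i$ is primitive in $H_1(X;\mathbb{Z})$ and hence not a proper power. If it is isomorphic to $\mathbb{Z}\oplus\mathbb{Z}$, one invokes the classification of rank-two abelian subgroups of the fundamental group of a prime link complement: such a subgroup is conjugate into a peripheral subgroup or into a Seifert piece, and since $m_i$ already lies in $P_i$ and peripheral tori of prime complements are maximal among essential tori, $\langle m_i,g\rangle$ is conjugate into $P_i$, forcing $g\in P_i$. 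This is essentially Simon's theorem on centralizers of peripheral elements in knot groups together with its JSJ-theoretic extension to links, and I expect it to be the main obstacle; everything else is bookkeeping with the noose model and combinatorial group theory. Combining the two directions yields $\rho$ injective $\iff$ $C_{\pi_1(X)}(m_i)=P_i$ for all $i$ $\iff$ $L$ prime.
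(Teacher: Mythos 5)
The paper offers no proof of this statement: it is quoted from Ryder's article, so there is nothing internal to compare against. Your outline is in fact essentially Ryder's own route: pass to Joyce's coset model $Q(L)\cong\bigsqcup_{i}\pi_1(X)/P_i$ with $\rho(gP_i)=g\,m_i\,g^{-1}$, observe that injectivity is exactly the condition $C_{\pi_1(X)}(m_i)=P_i$ for every $i$ (your homology argument for disjointness of the images of distinct pieces is correct), and then translate primeness into that centralizer condition by $3$-manifold topology. The ``composite $\Rightarrow$ not injective'' direction, via the swallow--follow annulus and normal forms in the amalgam $\pi_1(X')\ast_{\langle m_i\rangle}\pi_1(X'')$, is complete and correct.

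Two points need attention. First, in the crux direction your final inference is flawed as written: from ``$\langle m_i,g\rangle$ is conjugate into $P_i$'' you cannot conclude $g\in P_i$ without knowing that $P_i$ meets its conjugates trivially, and this malnormality fails precisely in the Seifert-fibered cases --- for a torus knot the centre of the group lies in $P_i\cap hP_ih^{-1}$ for every $h$. The equality $C(m_i)=P_i$ for prime knots is true, but it is genuinely the theorem of Noga and Simon (see also Burde--Zieschang), proved by a case analysis on the geometric piece containing the boundary torus rather than by the maximality argument you sketch; as an outline this is acceptable provided it is cited as the external input rather than presented as derived. Second, the statement as given for \emph{links} is delicate: by exactly the free-product centralizer computation you invoke, a split union of two prime knots satisfies $C(m_i)=P_i$ for every component, so $\rho$ is injective although such a link is not prime under the standard definition (which requires non-splitness). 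Either ``prime'' must here be read as ``not a nontrivial connected sum,'' or the theorem should be restricted to knots, which is the setting of Ryder's original paper; your parenthetical remark notices the phenomenon but treats it as reassurance rather than as a constraint on the statement being proved.
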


Therefore, by Theorem \ref{prime}, if $L$ is an oriented, prime link in $\mathbb{S}^3$ then the link quandle $Q(L)$ is a subquandle of $\mathrm{Conj}(\mathrm{Env}(Q(L)))\cong \mathrm{Conj}(\pi_1(\mathbb{S}^3\setminus L))$.

In \cite[Proposition 3.3]{BE}, the authors showed that:
\begin{proposition}\label{conj}
The conjugation quandle of any bi-circularly orderable group $G$
 is right orderable.
 \end{proposition}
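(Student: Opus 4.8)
The plan is to produce a right-invariant total order on $\mathrm{Conj}(G)$ directly from a bi-invariant circular order $c$ on $G$. Recall that a bi-circular order on $G$ restricts, after choosing and deleting a basepoint, to an ordering of $G \setminus \{e\}$; more precisely, for a right-invariant circular order the function $g \mapsto c(e,g,-)$ behaves like a "linearization" once we fix one element. The first step is therefore to recall (or re-derive in two lines) the standard fact that a circular order $c$ on a set $S$ together with a choice of point $p \in S$ yields a total order $<_p$ on $S \setminus \{p\}$ by declaring $x <_p y \iff c(p,x,y) = 1$; transitivity of $<_p$ is exactly the cocycle condition (axiom (2) of a circular ordering). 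Applying this with $S = G$ and $p = e$ gives a total order $<_e$ on $G \setminus \{e\}$.

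Next I would exploit bi-invariance of $c$ to control how $<_e$ interacts with the group operation. For the conjugation quandle we need: $x \prec y \implies x * r \prec y * r$, i.e. $rxr^{-1}$-type translates, so the key computation is to compare $c(e, rgr^{-1}, rhr^{-1})$ with $c(e, g, h)$. Using left-invariance to move $r^{-1}$ into the first slot, then right-invariance to cancel it, one gets $c(e, rgr^{-1}, rhr^{-1}) = c(r^{-1}\cdot e \cdot r, g r, h r)\cdot(\text{adjust})$ — more carefully, $c(e, rgr^{-1}, rhr^{-1}) = c(r^{-1}r, r^{-1}rgr^{-1}r, r^{-1}rhr^{-1}r)$ after inserting $r^{-1}(\cdot)r$ and using both invariances, which simplifies to $c(e,g,h)$. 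So conjugation is an order-automorphism of $(G\setminus\{e\}, <_e)$. The subtlety is the basepoint $e$: conjugation fixes $e$, so it genuinely acts on $G \setminus \{e\}$, and this is why the argument works for conjugation but not for arbitrary translations.

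The remaining step is to upgrade the order on $G \setminus \{e\}$ to an order on all of $G$, i.e. to place $e$ somewhere. Since $e$ is the quandle-conjugation-fixed point and $e * r = e$ for all $r$, we may simply declare $e$ to be the least element: define $\prec$ on $G$ by $e \prec g$ for all $g \neq e$, and $g \prec h \iff g <_e h$ for $g, h \neq e$. This is a total order, and right-invariance under $*$ holds: if $g \prec h$ with both nonzero we use the previous step, if $g = e \prec h$ then $e = e*r \prec h*r$ since $h * r = rhr^{-1} \neq e$.

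The main obstacle I anticipate is \emph{not} the algebra — the invariance computation is a short symbol-pushing exercise — but rather verifying transitivity of $<_p$ cleanly from the cocycle identity, and making sure the sign bookkeeping in $c(e, rgr^{-1}, rhr^{-1}) = c(e,g,h)$ is correct (one must be careful that inserting $r^{-1}(\cdot)r$ in all three slots and invoking left- then right-invariance really returns the original triple, with no sign flip). If that identity came out with a sign flip on some cosets one would instead only get that conjugation is order-reversing, which would still be harmless for circular orderability but would need a different final assembly; I expect, however, that bi-invariance makes it sign-preserving, which is exactly what Proposition~\ref{conj} asserts.
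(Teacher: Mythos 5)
Your argument is correct and is the standard one: linearize the bi-invariant circular order at the identity by setting $g \prec h$ iff $c(e,g,h)=1$, observe that conjugation fixes $e$ and preserves $c$ (your computation $c(e,rgr^{-1},rhr^{-1}) = c(r^{-1},gr^{-1},hr^{-1}) = c(e,g,h)$, using left- then right-invariance, is exactly right and involves no sign flip), and append $e$ as the minimum, which is compatible with $e*r=e$. Note that this paper does not actually prove the proposition --- it is quoted from Proposition 3.3 of \cite{BE} --- but your construction is precisely the expected proof, and all the points you flag as potential obstacles (transitivity of the basepoint order from the cocycle identity, and the placement of $e$) check out as you describe.
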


Hence, this proposition recover a result in \cite[Proposition 7]{DDHPV} and \cite[Proposition 3.4]{BPS2}.
\begin{corollary}\label{bi-order}
    The conjugation quandle of any bi-orderable group $G$
 is right orderable.
\end{corollary}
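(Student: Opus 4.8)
The plan is to reduce Corollary~\ref{bi-order} to Proposition~\ref{conj} by showing that every bi-orderable group is bi-circularly orderable. So, let $G$ be a group equipped with a bi-invariant total order $<$. First I would produce a circular ordering on $G$ from $<$ by the same recipe used for quandles in Section~\ref{Circular}: set $c_<(g_1,g_2,g_3)=1$ if $g_1<g_2<g_3$ or $g_2<g_3<g_1$ or $g_3<g_1<g_2$, set $c_<(g_1,g_2,g_3)=-1$ for the three reversed orderings, and $c_<(g_1,g_2,g_3)=0$ if the $g_i$ are not pairwise distinct. That this $c_<$ satisfies the two axioms of a circular ordering is the standard fact that a linear order induces a circular order, and I would only need to check the cocycle identity on a $4$-tuple by running through the (finitely many) relative orderings of $g_1,g_2,g_3,g_4$; this is routine and I would not spell it out in full.

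Next I would check invariance. Because $<$ is left-invariant, for any $g\in G$ we have $g_1<g_2<g_3$ if and only if $gg_1<gg_2<gg_3$, and likewise for the cyclic variants; hence $c_<(g_1,g_2,g_3)=c_<(gg_1,gg_2,gg_3)$, so $c_<$ is a left-invariant circular ordering. The identical argument using right-invariance of $<$ shows $c_<(g_1,g_2,g_3)=c_<(g_1g,g_2g,g_3g)$, so $c_<$ is also right-invariant. Therefore $G$ is bi-circularly orderable.

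Finally, Proposition~\ref{conj} applies verbatim to the bi-circularly orderable group $G$ and yields that $\mathrm{Conj}(G)$ is right orderable, which is exactly the assertion of Corollary~\ref{bi-order}. I do not expect any genuine obstacle here: the only content beyond citing Proposition~\ref{conj} is the passage from a bi-ordering to a bi-circular ordering, and that passage is immediate from the definitions once one notes that the induced circular ordering inherits whatever one-sided invariance the linear ordering has. (If one preferred an entirely self-contained argument, one could even bypass circular orderings altogether by observing that a bi-order $<$ on $G$ satisfies $g<h \Rightarrow kgk^{-1}<khk^{-1}$, so $<$ is itself a right ordering of $\mathrm{Conj}(G)$; but the route through Proposition~\ref{conj} is the one consistent with the development above.)
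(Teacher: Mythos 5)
Your proposal is correct and matches the paper's (implicit) argument: the paper states Corollary~\ref{bi-order} as an immediate consequence of Proposition~\ref{conj}, the only missing link being exactly the observation you supply, namely that a bi-ordering induces a circular ordering that inherits both one-sided invariances, so a bi-orderable group is bi-circularly orderable. Your parenthetical direct argument (that $g<h$ implies $kgk^{-1}<khk^{-1}$, so the bi-order is already a right ordering of $\mathrm{Conj}(G)$) is also valid and is in fact the shortest self-contained proof, but the route through Proposition~\ref{conj} is the one the paper intends.
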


Therefore, by combining Proposition \ref{conj} and Theorems \ref{isomp} and \ref{prime} we have the following:

\begin{corollary}\label{bicircu}
    If $L$ is an oriented, prime link in $\mathbb{S}^3$ such that the link quandle $Q(L)$ is not right orderable, then $\pi_1(\mathbb{S}^3\setminus L)$ is not bi-circularly orderable. In particular, $\pi_1(\mathbb{S}^3\setminus L)$ is not bi-orderable.
\end{corollary}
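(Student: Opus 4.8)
The plan is to argue by contraposition, assembling the three ingredients already in hand. Suppose, toward a contradiction, that $\pi_1(\mathbb{S}^3\setminus L)$ is bi-circularly orderable. By Theorem~\ref{isomp}, the enveloping group $\mathrm{Env}(Q(L))$ is isomorphic to $\pi_1(\mathbb{S}^3\setminus L)$, hence $\mathrm{Env}(Q(L))$ is bi-circularly orderable as well. Proposition~\ref{conj} then yields that the conjugation quandle $\mathrm{Conj}(\mathrm{Env}(Q(L)))$ is right orderable; fix a right ordering $\prec$ on it.

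Next I would bring in primeness. Since $L$ is prime, Theorem~\ref{prime} says the natural quandle homomorphism $\rho\colon Q(L)\to \mathrm{Conj}(\mathrm{Env}(Q(L)))$ is injective, so $Q(L)$ is isomorphic to the subquandle $\rho(Q(L))$. It then remains to note that a subquandle of a right orderable quandle is again right orderable: the restriction of $\prec$ to $\rho(Q(L))$ is a strict total order, and since $\rho(Q(L))$ is closed under $*$, the implication $s\prec t \Rightarrow s*r \prec t*r$ for $r,s,t\in \rho(Q(L))$ is simply inherited from the ambient quandle. Transporting this ordering back along the isomorphism $\rho$ produces a right ordering on $Q(L)$, contradicting the hypothesis that $Q(L)$ is not right orderable. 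This proves the first assertion.

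For the \emph{in particular} clause, I would recall that a bi-ordering on a group induces a bi-invariant circular ordering by the same $c_{\prec}$-type recipe used for quandles in Section~\ref{Circular}: set $c(g_1,g_2,g_3)=1$ exactly when $g_1,g_2,g_3$ appear in the cyclic pattern $g_1\prec g_2\prec g_3$ (or its cyclic rotations), $-1$ for the reverse patterns, and $0$ otherwise; two-sided invariance of the circular ordering follows from two-sided invariance of the linear ordering. Thus bi-orderable implies bi-circularly orderable, and the contrapositive of the first assertion gives that $\pi_1(\mathbb{S}^3\setminus L)$ is not bi-orderable.

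I do not anticipate a genuine obstacle: the only point needing a sentence of justification is the (elementary) stability of right orderability under passage to subquandles, combined with transport of structure along $\rho$. Everything else is a direct application of Proposition~\ref{conj}, Theorem~\ref{isomp} and Theorem~\ref{prime}, exactly as signposted in the remark preceding the corollary.
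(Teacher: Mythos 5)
Your argument is correct and follows exactly the route the paper intends: combine Theorem~\ref{isomp}, Theorem~\ref{prime} and Proposition~\ref{conj} to realize $Q(L)$ as a subquandle of the right orderable quandle $\mathrm{Conj}(\pi_1(\mathbb{S}^3\setminus L))$, then take the contrapositive; the paper leaves these steps implicit, and your write-up merely fills in the (correct) elementary details about subquandles inheriting right orderings and bi-orderability implying bi-circular orderability.
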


In Question 1.1 of \cite{CDN},  the authors asked, can non-bi-orderability of a knot group be determined by examining knot invariants other than the Alexander polynomial. Our Corollary \ref{bicircu} give a positive answer to Question 1.1 in \cite{CDN}.

Let $T(m, n)$ be a torus link with $m, n\geq 2$. By \cite[Proposition 7.1]{RSS}, the link quandle of the torus link $T(m, n)$ has a set of generators $\{a_1, a_2, \cdots, a_m\}$ with relations 
$$a_i=a_{n+i}*a_n*a_{n-1}*\cdots *a_2*a_1,\;\; {\rm for\; any} 1\leq i\leq m$$
where $a_{mj+k}=a_k$ for any $j\in \mathbb{Z}$ and $1\leq k \leq m.$
\begin{theorem}\label{proptorus}\cite{RSS} Let $m, n\geq 2$ such that one is not multiple of the other.
    If $T(m, n)$ is a nontrivial torus link, then its link quandle is not right orderable.
\end{theorem}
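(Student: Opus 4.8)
The plan is to use the explicit presentation of the link quandle of $T(m,n)$ given just above the statement, together with Theorem~\ref{rcoinfinite} (a nontrivial right circularly orderable quandle is infinite) or, more directly, Lemma~\ref{triv}. First I would set up notation: let $Q = Q(T(m,n))$ with generators $a_1,\dots,a_m$ and the relations $a_i = a_{n+i} * a_n * a_{n-1} * \cdots * a_2 * a_1$, where indices are read modulo $m$. Write $w = R_{a_1}\circ R_{a_2}\circ\cdots\circ R_{a_n}$ for the composite right-multiplication automorphism appearing on the right-hand side, so that the relations say $a_i = w(a_{n+i})$ for all $i$, i.e. $w$ permutes the generators by a shift $a_{n+i}\mapsto a_i$ (equivalently $w(a_j) = a_{j-n}$). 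Since $\gcd(m,n)=:d$ and by hypothesis $d\notin\{m,n\}$ — actually the hypothesis is that neither divides the other — the shift by $n$ on $\mathbb{Z}/m$ has some finite order $N$ (namely $N = m/d$), so $w^N$ fixes every generator $a_j$.

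Next, I would argue that $w^N$ is in fact the identity automorphism of $Q$, because $Q$ is generated by the $a_j$ and an automorphism fixing a generating set is the identity — here one must be slightly careful: $w^N$ fixes each $a_j$, and since $w^N$ is a quandle automorphism and the $a_j$ generate $Q$ under $*$ and $*^{-1}$, it fixes all of $Q$. Now suppose for contradiction that $Q$ is right orderable with right ordering $\prec$; by Lemma~3.1 of \cite{BE} (quoted in the excerpt) it is then right circularly orderable, with the secret ordering $c = c_\prec$. The key point is to extract from $w^N = \mathrm{id}$ a relation of the form $y *_k x = y$ for suitable distinct $x,y\in Q$ and some positive integer $k$, then invoke Lemma~\ref{triv}: that lemma gives $c(x,y,y*x) = c(x,y,y*_2 x) = \cdots = c(x,y,y*_k x) = c(x,y,y) = 0$, forcing $c(x,y,y*x)=0$; but $x$, $y$, $y*x$ should be made pairwise distinct (using that $Q$ is nontrivial and right orderable, hence infinite and in particular has $y*x\neq y$ for appropriate choices), a contradiction with $c$ being a genuine circular ordering.

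The main obstacle is the middle step: the automorphism $w$ is a composite $R_{a_1}\cdots R_{a_n}$, not a single power $R_x^k$, so Lemma~\ref{triv} — which is stated for iterates of a single $R_x$ — does not apply verbatim. To bridge this I would either (a) observe that the nontriviality hypothesis (one of $m,n$ not dividing the other) lets us instead exhibit, purely from the relations, that some single generator $a_j$ satisfies $(R_{a_j})^{\ell} = \mathrm{id}$ on a relevant element — for instance by composing the relation $a_i = w(a_{n+i})$ with itself and simplifying when $m$ and $n$ interact nontrivially — thereby producing a genuine $n'$-quandle-type relation and then quoting Theorem~\ref{n-quandle}; or (b) generalize Lemma~\ref{triv} slightly: if $\phi$ is any inner automorphism of $Q$ fixing $x$ and $y$, then (by the same cocycle-condition induction, applied to the orbit $y, \phi(y), \phi^2(y),\dots$ and using right-invariance of $c$ under each $R_{a_k}$) one gets $c(x,y,y*x)=c(x,y,\phi(y))$, and since $\phi$ has finite order this chain closes up to give $c(x,y,y*x)=c(x,y,y)=0$. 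Route (b) is cleanest and I expect it is what the authors intend: the real content is that a finite-order inner symmetry is incompatible with right circular orderability, exactly as in the proof of Theorem~\ref{n-quandle}, and the torus-link relations manufacture such a finite-order symmetry whenever neither $m$ nor $n$ divides the other. Finally I would note the corollary-style conclusion: not right circularly orderable implies not right orderable, matching the statement. $\hfill\square$
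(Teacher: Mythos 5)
The paper gives no proof of this statement --- it is imported verbatim from \cite{RSS} (the citation sits in the theorem header), so there is nothing internal to compare against; your proposal must stand on its own, and it has two genuine gaps. The first is that your central reduction rests on a false principle. You want to conclude that the finite-order inner automorphism $w=R_{a_1}\circ\cdots\circ R_{a_n}$, with $w^N=\mathrm{id}$, is incompatible with right \emph{circular} orderability ``exactly as in the proof of Theorem~\ref{n-quandle}.'' But a finite-order circular-order-preserving bijection need not be trivial: rotation of $\mathbb{S}^1$ by $2\pi/N$ preserves the circular order and has order $N$. What makes Lemma~\ref{triv} and Theorem~\ref{n-quandle} work is not finiteness of order but the fact that $R_x$ fixes $x$: the telescoping step needs $c(x,\phi^i(y),\phi^{i+1}(y))=c(\phi^{-i}(x),y,\phi(y))=c(x,y,\phi(y))$, which requires $\phi^{-i}(x)=x$. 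So your route (b) needs $w$ to fix some element, and the only candidates in sight are the generators $a_j$, which are fixed by $w$ precisely when $a_j=a_{j-n}$ --- i.e.\ precisely what you are trying to refute. Route (a) is unsubstantiated and false in general for link quandles. Since the theorem concerns only right (linear) orderability, the detour through the secret circular ordering is also counterproductive: for a genuine linear right order the finite-order argument \emph{does} work ($\phi(x)\succ x$ propagates to $\phi^N(x)\succ x$, a contradiction), so you can conclude $w=\mathrm{id}$ directly from $w^N=\mathrm{id}$ --- but only by staying with the linear order.

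The second gap is that even after $w=\mathrm{id}$ the proof is not finished. That equality yields only $a_i=a_{n+i}$ for all $i$, hence $a_i=a_{i+d}$ with $d=\gcd(m,n)$, i.e.\ the quandle is generated by $d$ elements. This contradicts nothing until you show that the actual link quandle $Q(T(m,n))$ cannot degenerate in this way: for torus knots ($d=1$) one gets a one-element quandle, which is ruled out because its enveloping group would be $\mathbb{Z}$ rather than the non-cyclic knot group (Theorem~\ref{isomp}); for torus links with $d\geq 2$ one needs a further input, such as a surjection of $Q(T(m,n))$ onto a quandle in which the relevant $a_i$ remain distinct, or a meridional-rank obstruction to generating the link group by $d$ meridians. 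Your proposal gestures at this with ``$x$, $y$, $y*x$ should be made pairwise distinct \dots for appropriate choices'' but never produces the choices or the argument; this is where the real content of the theorem lives, and it is missing.
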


Hence, we have the following result:
\begin{corollary}
    The knot group of a nontrivial torus knot is not bi-circularly orderable.
\end{corollary}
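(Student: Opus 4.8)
The plan is to deduce the statement directly from Theorem~\ref{proptorus} together with Corollary~\ref{bicircu}. The key observation is that a nontrivial torus knot $T(m,n)$ is a torus link in which $m,n \geq 2$ and neither of $m,n$ divides the other is \emph{not} automatically satisfied: for a knot we need $\gcd(m,n)=1$, which forces (since both are at least $2$) that neither divides the other. So $T(m,n)$ falls under the hypothesis of Theorem~\ref{proptorus}, and hence its link quandle $Q(T(m,n))$ is not right orderable.

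Next I would check the remaining hypotheses of Corollary~\ref{bicircu}: the torus knot $T(m,n)$ is a knot in $\mathbb{S}^3$, and it is prime (torus knots are well known to be prime). It carries a natural orientation, so it is an oriented, prime link in $\mathbb{S}^3$. Combining primeness with the fact that $Q(T(m,n))$ is not right orderable, Corollary~\ref{bicircu} immediately yields that $\pi_1(\mathbb{S}^3 \setminus T(m,n))$ is not bi-circularly orderable (and a fortiori not bi-orderable). Since $\pi_1(\mathbb{S}^3 \setminus T(m,n))$ is precisely the knot group of the torus knot, this is exactly the assertion of the corollary.

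I do not anticipate a genuine obstacle here — the corollary is a short consequence of results already established in the paper. The only point requiring a modicum of care is the translation between the "torus link" language of Theorem~\ref{proptorus} and the "torus knot" setting: one must record that $T(m,n)$ is a knot precisely when $\gcd(m,n)=1$, and that this coprimality condition (with $m,n\geq 2$) is a special case of "one is not a multiple of the other." A one-line remark suffices. The proof would therefore read: let $K = T(m,n)$ be a nontrivial torus knot, so $m,n\geq 2$ and $\gcd(m,n)=1$; by Theorem~\ref{proptorus} its link quandle $Q(K)$ is not right orderable; since $K$ is oriented and prime, Corollary~\ref{bicircu} shows $\pi_1(\mathbb{S}^3\setminus K)$ is not bi-circularly orderable, completing the argument.
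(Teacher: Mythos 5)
Your proposal is correct and follows exactly the route the paper intends: the corollary is stated as an immediate consequence of Theorem~\ref{proptorus} combined with Corollary~\ref{bicircu}, using that a nontrivial torus knot $T(m,n)$ has $m,n\geq 2$ coprime (so neither divides the other) and is prime. The only refinement worth making is to tidy the slightly garbled sentence about the divisibility condition, but the mathematical content matches the paper's argument.
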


\section*{Acknowledgement} 
Mohamed Elhamdadi was partially supported by Simons Foundation collaboration grant 712462.

\end{document}